\def\thesection{\arabic{section}}
\def\theequation{\thesection.\arabic{equation}}
\newcommand{\ds} {\displaystyle}
\newcommand{\e}{\epsilon}
\newcommand{\al} {\alpha}
\newcommand{\ba} {\beta}
\newcommand{\de} {\delta}
\newcommand{\Om} {\Omega}
\newcommand{\ra} {\rightarrow}
\newcommand{\De} {\Delta}
\newcommand{\la} {\lambda}
\newcommand{\noi} {\noindent}
\newcommand{\na} {\nabla}
\newcommand{\mb} {\mathbb}
\newcommand{\mc} {\mathcal}
\newcommand{\ld} {\langle}
\newcommand{\rd} {\rangle}
\newcommand{\I}{\int\limits_}
\def\theequation{\@arabic{\c@section}.\@arabic{\c@equation}}
\def\QED{\hfill {$\square$}\goodbreak \medskip}
\newtheorem{Theorem}{Theorem}[section]
\newtheorem{Lemma}[Theorem]{Lemma}
\newtheorem{Proposition}[Theorem]{Proposition}
\newtheorem{Corollary}[Theorem]{Corollary}
\newtheorem{Remark}[Theorem]{Remark}
\newtheorem{Definition}[Theorem]{Definition}
\def\XXint#1#2#3{{\setbox0=\hbox{$#1{#2#3}{\int}$ }
		\vcenter{\hbox{$#2#3$ }}\kern-.6\wd0}}
\begin{document}
	{\vspace{0.01in}
		\title
		{Choquard equation involving mixed local and nonlocal operators}
		\author{ {\bf G.C. Anthal\footnote{	Department of Mathematics, Indian Institute of Technology, Delhi, Hauz Khas, New Delhi-110016, India. e-mail: Gurdevanthal92@gmail.com}\;, J. Giacomoni\footnote{ 	LMAP (UMR E2S UPPA CNRS 5142) Bat. IPRA, Avenue de l'Universit\'{e}, 64013 Pau, France. 
					email: jacques.giacomoni@univ-pau.fr}  \;and  K. Sreenadh\footnote{Department of Mathematics, Indian Institute of Technology, Delhi, Hauz Khas, New Delhi-110016, India.	e-mail: sreenadh@maths.iitd.ac.in}}}
		\date{}
		
		\maketitle
		
	\begin{abstract}
In this article, we study an elliptic problem involving an operator of mixed order with both local and nonlocal aspects and in the presence of critical nonlinearity of Hartree type. To this end, we first investigate the corresponding Hardy-Littlewood-Sobolev inequality and detect the optimal constant. Using variational methods and a Poho\v{z}aev identity we then show the existence and nonexistence results for the corresponding subcritical perturbation problem.  \\
\noi \textbf{Key words:} Local-nonlocal operators, critical Choquard nonlinearity, Hardy-Littlewood-Sobolev inequality, existence reusults, variational methods, Poho\v{z}aev identity.  \\

\noi \textit{2020 Mathematics Subject Classification:} 35A01, 35A15, 35B33, 35B65. \\
	\end{abstract}	
\section{Introduction}\label{I}
 The aim of the present article is to study the following problem consisting of combination of local and nonlocal operators along with critical Choquard nonlinearity
 \begin{align}\label{e1.1}
 	\begin{cases}	\mc L u = \left(\ds\I{\Om}\frac{|u(y)|^{2_\mu^\ast}}{|x-y|^\mu}\right)|u|^{2_\mu^\ast-2}u +\la u^p~\text{in}~\Om,\\
 		u \equiv 0~\text{in}~\mb R^n \setminus \Om,~u\geq0~\text{in}~\Om,
 	\end{cases}
 \end{align}
 where $\Om$ is a bounded domain of $\mb R^n$ with $C^{1,1}$ boundary $\partial \Om$, $\la $ is a real parameter, $p \in [1,2^*-1)$, $n \geq 3$, $0 <\mu <n$, $2_\mu^\ast =(2n-\mu)/(n-2)$ and $2^* = 2n/(n-2)$. The mixed operator $\mc L$ in \eqref{e1.1} is given by 
 \begin{equation}\label{ecl}
 	\mc L =-\De +(-\De)^s~\text{for some}~ s \in (0,1).
 \end{equation}
 The word "mixed" refers to the type of the operator combining both local and nonlocal features and to the differential order of the operator. The operator $\mc  L$ is obtained by the superposition of the classical Laplacian $(-\De)$ and the fractional Laplacian $(-\De)^s$, which for a fixed parameter $s \in (0,1)$, is defined by 
 \begin{align*}
 	(-\De)^su =C(n,s)P.V. \I{\mb R^n} \frac{u(x)-u(y)}{|x-y|^{n+2s}}dy.
 \end{align*}
 The term $"P.V"$ stands for the Cauchy's principal value and $C(n,s)$ is a normalizing constant, whose explicit expression is given by
 \begin{equation*}
 	C(n,s)=\left(\I{\mb R^n}\frac{1-\cos(z_1)}{|z|^{n+2s}}dz\right)^{-1}.
 \end{equation*}
 The study of the mixed operators of the form $\mc L$ in \eqref{ecl} is motivated by the wide range of applications. Indeed these operators arise naturally in the applied sciences, to study the role of the impact caused by a local and a nonlocal change in a physical phenomenon. These operators model diffusion patterns with different time scales (loosely speaking, the higher order operator leading the diffusion for small scales times and the lower order operator becoming predominant for large times) and they arise for instance in bi-modal power law distribution processes, see \cite{PV1}. Further applications arise in the theory of optimal searching, biomathematics and animal forging, see \cite{DLV, DV} and the references therein. See also \cite{KLS, MV, PV} and the references therein for further applications.
 
 \noindent Due to these applications and mathematical interest, the study of elliptic problems involving mixed type of operators having both local and nonlocal features is attracting a lot of attention. The current research has specifically focused on several problems in the existence and regularity theory. In the following, we present a short literature review regarding existence and regularity of solutions of the problem of the type
 \begin{equation*}
 	-\De_q u +(-\De)_q^s u =f ~\text{in}~\Om,
 \end{equation*}
where $\Om \subset \mb R^n$ is some domain, $q \in (1,\infty)$, $s \in (0,1)$, $-\De_q$ and $(-\De)_q^s$ are the usual $q-$Laplacian and fractional $q-$Laplacian operators.
 
 \noindent In the linear case $q=2$, to study the structural results like existence of weak solutions, strong maximum principle, local boundedness, interior Sobolev and Lipschitz regularity, along with various other qualitative properties of solutions we refer to \cite{AC, BCCI, BDVV2}. We also refer to \cite{BDVV1, BDVV4, DLV, DV} and references therein for other issues as symmetry and properties associated to the first eigenvalue.
 In the nonlinear setting $q\ne 2$, for $f=0$, Garain and Kinnunen \cite{GK} obtained the regularity results for weak solutions in terms of local boundedness, Harnack estimates, local H\"{o}lder continuity and semicontinuity. For the inhomogenous case, boundedness and strong maximum principle has been established in \cite{BMV} (see also \cite{BDVV3}). The question of H\"{o}lder regularity was investigated by De Filipps-Mingione in \cite{FM} for a large class of mixed local and nonlocal  operators. Under some suitable assumptions, the authors prove almost Lipschitz local continuity and local H\"oder regularity of the gradient (see Theorem 3, 6 and 7 respectively there).
  We would also like to mention the work \cite{GU} where the case of singular nonlinearity is studied. Also to study regularity theory for nonhomogenous growth fractional problems, we refer to the work of Giacomoni-Kumar-Sreenadh \cite{GKS}.
 
 \noindent The stirring motivation for the present paper comes from the study of nonlinear problems with critical exponents. These problems are usually modelled as
 \begin{align}\label{ecp}
 	\mc J u = \mc W(u) +\la \mc Y(u) \in \Om,
 \end{align} 
 with some appropriate conditions on $u$. Here $\Om \subset \mb R^n$ is a domain, $\mc J$ is some operator (local/nonlocal/mixed), $\mc W$ is a critical nonlinearity (notion of criticality changes from problem to problem and relies on the limit of compactness inherited by the problem), $\mc Y$ is  subcritical nonlinearity and $\la \in \mb R$ is a parameter. In these problems mostly we address the issue of existence and multiplicity of the solutions with respect to the parameter $\la$. In the following, we present a brief literature survey of the problems of the type \eqref{ecp}. 
 \begin{itemize}
 	\item \textit{ Local case i.e. when $\mc J =-\De$}: The seminal breakthrough in this case was the work of Brezis and Nirenberg \cite{BN}. Here the authors studied the problem of the type
 	\begin{equation}\label{ebn}
 		\left\{-\De u =|u|^{2^*-2}u +\la u^q~\text{in}~\Om,~u=0~\text{on}~\partial \Om,~u>0~\text{in}~\Om \right.
 	\end{equation}
 	where $\Om \subset \mb R^n$ is a bounded domain and $2^* =2n/(n-2)$ is the critical exponent for the embedding of $H_0^1(\Om)$ to $L^p(\Om)$. When $\la=0$ and $\Om$ is starshaped with respect to some point $x \in \Om$, nonexistence of weak solutions holds. For some suitable range of $\la$, the authors investigate the existence of weak solutions. By developing some skill full techniques in estimating the minimax level, the authors were able to prove the existence of nontrivial solutions for the problem \eqref{ebn} under a linear and subcritical superlinear perturbation. Later in \cite{ABC} Ambrosetti et. al. investigated the problem \eqref{ebn} with the sublinear perturbation  and established the existence and multiplicity results depending on the parameter $\la$. We also refer to \cite{GP} to study the critical exponent problem for the
 	$p$-Laplacian operator.\\
 	The case of critical Choquard nonlinearity namely $ \mc W = \left(\ds\I{\Om}\frac{|u(y)|^{2_\mu^\ast}}{|x-y|^\mu}\right)|u|^{2_\mu^\ast-2}u $ with linear and sublinear or superlinear perturbations were studied by Gao and Yang in their works \cite{GY1} and \cite{GY} respectively and obtain the existence and multiplicity of solutions depending on some range of 
 	$\la$. Here the critical exponent $2_\mu^\ast$ is in accordance with well known Hardy-Littlewood-Sobolev inequality which states as under:
 	\begin{Proposition}\label{hls}
 		\textbf{Hardy-Littlewood-Sobolev inequality} Let $r, q>1$ and $0<\mu<n$ with $1/r+1/q+\mu/n=2$, $g\in L^{r}(\mb R^n), h\in L^q(\mb R^n)$. Then, there exist a sharp constant $C(r,q,n,\mu)$ independent of $g$ and $h$ such  that 
 		\begin{equation}\label{hlse}
 			\int\limits_{\mb R^n}\int\limits_{\mb R^n}\frac{g(x)h(y)}{|x-y|^{\mu}}dxdy \leq C(r,q,n,\mu) |g|_r|h|_q.
 		\end{equation}                             
 	\end{Proposition}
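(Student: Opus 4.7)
The plan is to reduce \eqref{hlse} via duality to the boundedness of the Riesz potential
\[
T_\mu g(x) := \int_{\mb R^n} \frac{g(y)}{|x-y|^\mu}\,dy
\]
from $L^r(\mb R^n)$ to $L^{q'}(\mb R^n)$, where $q'$ is the H\"older conjugate of $q$. The algebraic relation $1/r+1/q+\mu/n=2$ is equivalent to $1/q'=1/r-(n-\mu)/n$, so once the mapping property of $T_\mu$ is in hand, H\"older's inequality applied to $\int h(x)\,T_\mu g(x)\,dx$ immediately produces \eqref{hlse} with $C(r,q,n,\mu)$ equal to the operator norm of $T_\mu$.

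To establish the boundedness of $T_\mu$, I would use a pointwise Hedberg-type estimate. Splitting the convolution kernel at a radius $R>0$, a dyadic annular decomposition bounds the near-field part by $C R^{n-\mu} Mg(x)$, where $M$ is the Hardy-Littlewood maximal operator and the hypothesis $\mu<n$ ensures convergence of the resulting geometric series. H\"older's inequality bounds the far-field part by $C R^{n/r'-\mu}\|g\|_r$, provided $|y|^{-\mu}\in L^{r'}$ off a ball, which holds iff $r<n/(n-\mu)$. Optimizing in $R$ yields the estimate
\[
|T_\mu g(x)| \le C\,(Mg(x))^{r/q'}\,\|g\|_r^{1-r/q'},
\]
since the two exponents satisfy $r/q'=1-r(n-\mu)/n$ by the relation above. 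Raising this to the power $q'$ and integrating, the exponent of $Mg$ becomes exactly $r$; the strong-type bound $\|Mg\|_r\le C\|g\|_r$ (valid for $r>1$) then produces $\|T_\mu g\|_{L^{q'}}\le C\|g\|_r$, which closes the argument.

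The main delicate point is the exponent bookkeeping: I must verify that the endpoint constraints $r>1$ (needed for the maximal inequality) and $r<n/(n-\mu)$ (needed for the far-field H\"older bound) are implied by the hypotheses $r,q>1$ and $1/r+1/q+\mu/n=2$; a short computation indeed shows that $q>1$ is equivalent to $r<n/(n-\mu)$. This pathway yields some explicit finite constant $C(r,q,n,\mu)$ but not the optimal one; identifying the best (sharp) constant would additionally require Riesz rearrangement and, in the conformal case, Lieb's conformal invariance argument, which I would not pursue here since the statement and the intended applications only require existence of a finite $C$.
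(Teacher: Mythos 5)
Your argument is correct, but note that the paper does not prove this Proposition at all: it is quoted as the classical Hardy--Littlewood--Sobolev inequality (the standard result of Lieb--Loss type, used here only through the finiteness of the constant and, later, through the known extremals for $S_{H,L,C}$ taken from the cited literature). Your route --- duality to the Riesz potential $T_\mu$ with kernel $|x-y|^{-\mu}$, i.e.\ $I_\alpha$ with $\alpha=n-\mu$, followed by the Hedberg pointwise bound $|T_\mu g|\le C (Mg)^{r/q'}\|g\|_r^{1-r/q'}$ and the strong $(r,r)$ maximal inequality --- is a standard and complete proof of the inequality with \emph{some} finite constant, and your exponent bookkeeping checks out: $1/q'=1/r-(n-\mu)/n$ is exactly the scaling relation, $q>1$ is equivalent to $r<n/(n-\mu)$ (so the far-field H\"older bound applies), $r>1$ gives the maximal inequality, and $q'>r$ keeps the Hedberg exponent in $(0,1)$. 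Two small caveats: one should apply the argument to $|g|,|h|$ so that the double integral is dominated correctly for sign-changing $g,h$; and, as you say, this does not identify the sharp constant --- though once a finite admissible constant exists, the sharp (smallest) one exists trivially as an infimum, which is all the statement literally asserts; the explicit value and the extremal functions (needed elsewhere in the paper for Remark \ref{r3.1}) require the rearrangement/conformal-invariance argument of Lieb, which you correctly flag as outside your scope. So your proposal proves the statement as used in the paper, by a self-contained argument the paper itself does not supply.
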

 	In particular, let $g = h = |u|^t$ 
 	then by Hardy-Littlewood-Sobolev inequality we see that,
 	$$                               \int\limits_{\mb R^n}\int\limits_{\mb R^n}\frac{|u(x)^t|u(y)|^t}{|x-y|^{\mu}}dxdy$$
 	is well defined if $|u|^t \in L^\nu(\mb R^n)$ with $\nu =\frac{2n}{2n-\mu}>1$.
 	Thus, from Sobolev embedding theorems, we must have
 	\begin{equation*}
 		\frac{2n-\mu}{n} \leq t \leq \frac{2n-\mu}{n-2}.
 	\end{equation*}
 	From this, for $u \in H^1(\mb R^n) $ we have
 	$$    \left(\int\limits_{\mb R^n}\int\limits_{\mb R^n}\frac{|u(x)|^{2_\mu^\ast}|u(y)|^{2_\mu^\ast}}{|x-y|^\mu}dxdy \right)^\frac{1}{2_\mu^\ast} \leq C(n,\mu)^\frac{1}{2_\mu^\ast}  |u|_{2^*}^2  .                                     $$
 	We denote $S_{H,L,C}$ to denote the best constant associated to
 	\begin{align}\label{ebc}	S_{H,L,C}=\inf\limits_{u \in C_0^\infty(\mb R^n)\setminus \{0\}} \frac{\|\nabla u\|_{L^2(\mb R^n)}^2}{\left(\I{\mb R^n}\I{\mb R^n}\frac{|u(x)|^{2_\mu^\ast}|u(y)|^{2_\mu^\ast}}{|x-y|^\mu}dxdy\right)^\frac{1}{2_\mu^\ast}}.
 	\end{align}
 	\item \textit{Nonlocal case i.e., when $\mc J = (-\De)^s$, for some $s \in (0,1)$}: Servadei and Valdinoci  studied in \cite{SV2} the critical problems in the case of linear perturbation with $\mc W(u) =|u|^{2_s^*-2}u$ and  $2_s^\ast =2n/(n-2s)$, the critical exponent in the fractional Sobolev inequality. The cases of sublinear and superlinear perturbations were handled by Barrios et. al. in their work \cite{BCSS}. In both of these works the authors showed the existence and multiplicity of solutions for some range of $\la$. 
Also the critical problem with fractional $p$-Laplacian was studied in \cite{MPSY}.\\
 	The case of critical Choquard nonlinearity  i.e., when $\mc W = \left(\ds\I{\Om}\frac{|u(y)|^{2_{\mu,s}^\ast}}{|x-y|^\mu}\right)|u|^{2_{\mu,s}^\ast-2}u $  with linear perturbation was handled by Mukherjee and Sreenadh \cite{MS}. Here the authors proved the following existence results depending up on different values of $n$ and $\la$:
 	\begin{enumerate}
 		\item For $n \geq 4s$, $s \in (0,1)$, the problem \eqref{ecp} has a nontrivial solution for every $\la>0$ such that $\la$ is not an eigenvalue of $(-\De)^s$ with homogenous Dirichlet boundary condition on $\mb R^n\setminus\Om$.
 		\item For $s \in (0,1)$ and $2s<n<4s$, then there exist $\bar{\la} >0$ such that for any $\la > \bar{\la}$ different from the eigenvalues of $(-\De)^s$ with homogenous Dirichlet boundary condition on $\mb R^n \setminus \Om$, \eqref{ecp} has a nontrivial weak solution.
 	\end{enumerate}
 	\item \textit{ Mixed operator case i.e., when $\mc J =\mc L$}: The critical problems involving operators of mixed type are very little explored. In this regard we can quote \cite[Biagi et al]{BDVV}. Here the authors first investigated the mixed Sobolev inequality and showed that the Sobolev constant in the mixed case is actually equal to the classical Sobolev constant. They then used this information to obtain the existence results both for linear and superlinear subcritical perturbation.
 \end{itemize}
 Motivated by the above discussion, in the present work we considered the problem \eqref{e1.1}. As far as we know there is no result involving the mixed local and nonlocal operator with a critical nonlocal term as the Choquard nonlinearity. We aim to bridge this gap in the present paper. As we know the study of problems with Choquard nonlinearity is linked to the Hardy-Littlewood-Sobolev inequality, we first investigated the  inequality for the mixed operator case. More precisely, we consider a fractional exponent $s \in (0,1)$, an open set $\Om \subset \mb R^n$, not necessarily bounded or connected, and all functions $u:\mb R^n \ra \mb R$ which vanish outside $\Om$, accounting for a mixed Hardy-Littlewood-Sobolev inequality of the type
 \begin{align*}
 	S_{H,L,M}(\Om)\I\Om\I\Om\frac{|u(x)|^{2_\mu^\ast}|u(y)|^{2_\mu^\ast}}{|x-y|^\mu}dxdy \leq \| \na u\|_{L^2(\mb R^n)}^2 + \frac{C(n,s)}{2}\I{\mb R^n}\I{\mb R^n}\frac{|u(x)-u(y)|^2}{|x-y|^{n+2s}}dxdy.
 \end{align*}
 Here above, the constant $S_{H,L,M}(\Om)$ is taken to be the largest possible one for which such an inequality holds true. Using \eqref{ebc} it is easy to see that 
 \begin{equation*}
 	S_{H,L,M}(\Om) \geq S_{H,L,C}.
 \end{equation*}
 In principle, one may suspect in fact a strict inequality occurs (because, for instance, $S_{H,L,C}$ is independent of $\Om$, as well as of $s$), but this is not the case according to the following result we prove:
 \begin{Theorem}\label{t1.1}
 	Let $s \in (0,1)$ and $\Om \subset \mb R^n$ be an arbitrary open set. Then we have $S_{H,L,M}(\Om)=S_{H,L,C}$.
 \end{Theorem}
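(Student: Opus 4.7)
The inequality $S_{H,L,M}(\Om) \geq S_{H,L,C}$ is immediate from the definitions. Indeed, any admissible test function $u$ in the variational problem for $S_{H,L,M}(\Om)$ vanishes outside $\Om$ and may thus be viewed as a function on all of $\mb R^n$, so that trivially
\[ \|\na u\|_{L^2(\mb R^n)}^2 \leq \|\na u\|_{L^2(\mb R^n)}^2 + \frac{C(n,s)}{2}\I{\mb R^n}\I{\mb R^n}\frac{|u(x)-u(y)|^2}{|x-y|^{n+2s}}dxdy. \]
Consequently the mixed Rayleigh quotient dominates the purely local Hardy--Littlewood--Sobolev--Choquard quotient whose infimum over $C_0^\infty(\mb R^n)\setminus\{0\}$ is $S_{H,L,C}$ by \eqref{ebc}.

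The content of the theorem is therefore the reverse inequality. The plan is to probe $S_{H,L,M}(\Om)$ with rescaled and translated copies of an arbitrary $\phi \in C_0^\infty(\mb R^n) \setminus \{0\}$, exploiting the strictly smaller scale-dimension of the Gagliardo seminorm compared to the Dirichlet energy. Fix $x_0 \in \Om$ and $r>0$ with $\overline{B_r(x_0)}\subset \Om$, and for $\lambda > 0$ set
\[ \phi_\lambda(x) := \lambda^{(n-2)/2}\,\phi\bigl(\lambda(x - x_0)\bigr). \]
For $\lambda$ large enough, $\mathrm{supp}(\phi_\lambda) \subset B_r(x_0) \subset \Om$, so $\phi_\lambda$ is admissible in the variational problem defining $S_{H,L,M}(\Om)$. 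A direct change of variables $z = \lambda(x-x_0)$, $w = \lambda(y-x_0)$ in each integral, combined with the identity $(n-2)\cdot 2_\mu^* = 2n - \mu$, shows that the classical Dirichlet energy and the Choquard integral are invariant,
\[ \|\na \phi_\lambda\|_{L^2(\mb R^n)}^2 = \|\na \phi\|_{L^2(\mb R^n)}^2, \qquad \I{\mb R^n}\I{\mb R^n}\frac{|\phi_\lambda(x)|^{2_\mu^*}|\phi_\lambda(y)|^{2_\mu^*}}{|x-y|^\mu}dxdy = \I{\mb R^n}\I{\mb R^n}\frac{|\phi(x)|^{2_\mu^*}|\phi(y)|^{2_\mu^*}}{|x-y|^\mu}dxdy, \]
while the Gagliardo seminorm contracts as
\[ \I{\mb R^n}\I{\mb R^n}\frac{|\phi_\lambda(x)-\phi_\lambda(y)|^2}{|x-y|^{n+2s}}dxdy = \lambda^{2s-2}\I{\mb R^n}\I{\mb R^n}\frac{|\phi(x)-\phi(y)|^2}{|x-y|^{n+2s}}dxdy. \]
Since $s \in (0,1)$ gives $2s-2 < 0$, the right-hand side vanishes as $\lambda \to \infty$. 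Plugging $\phi_\lambda$ into the mixed Rayleigh quotient and letting $\lambda \to \infty$ then yields
\[ S_{H,L,M}(\Om) \leq \frac{\|\na \phi\|_{L^2(\mb R^n)}^2}{\left(\I{\mb R^n}\I{\mb R^n}\frac{|\phi(x)|^{2_\mu^*}|\phi(y)|^{2_\mu^*}}{|x-y|^\mu}dxdy\right)^{1/2_\mu^*}}, \]
and taking the infimum over $\phi \in C_0^\infty(\mb R^n)\setminus\{0\}$ produces $S_{H,L,M}(\Om) \leq S_{H,L,C}$.

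I do not expect any single step to be a real obstacle: the argument reduces to three elementary scaling identities and one limit. The conceptual point is that the Gagliardo seminorm has strictly smaller scale-dimension ($2s-2<0$) than both the Dirichlet energy and the Choquard integral, which are scale-invariant under the normalization $\lambda^{(n-2)/2}\phi(\lambda\,\cdot\,)$. This reflects the familiar heuristic that the classical Laplacian dominates at small length scales; concentrating test functions at a single point of $\Om$ therefore kills the fractional contribution in the limit and collapses the mixed best constant onto the purely local one, irrespective of $\Om$ and $s$.
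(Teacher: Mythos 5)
Your proposal is correct and is essentially the paper's own argument: both prove the easy inequality from the definitions and then kill the Gagliardo seminorm by the rescaling $\lambda^{(n-2)/2}\phi(\lambda\,\cdot)$, under which the Dirichlet and Choquard terms are invariant while $[\cdot]_s^2$ picks up the factor $\lambda^{2s-2}\to 0$. The only (cosmetic) difference is that you rescale arbitrary $\phi\in C_0^\infty(\mb R^n)$ into a ball inside $\Om$, which lets you reach $S_{H,L,C}$ directly, whereas the paper rescales $u\in C_0^\infty(\Om)$ and then invokes the $\Om$-independence of the classical constant $S_{H,L,C}(\Om)=S_{H,L,C}$.
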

 Our next result answers the question whether or not the optimal constant $S_{H,L,M}(\Om)$ is achieved. It states as under:
 \begin{Theorem}\label{t1.2}
 	Let $\Om \subseteq \mb R^n$ be an arbitrary open set. Then the optimal constant $S_{H,L,M}(\Om)$ is never achieved. 
 \end{Theorem}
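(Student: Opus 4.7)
The plan is to argue by contradiction via Theorem~\ref{t1.1}. Assume that a nontrivial $u_0$, vanishing outside $\Om$ and extended by zero to all of $\mb R^n$, attains $S_{H,L,M}(\Om)$. The key observation is that the fractional contribution in the numerator of the Rayleigh-type quotient defining $S_{H,L,M}(\Om)$ is strictly positive for any admissible candidate, whereas the purely local quotient is already bounded below by $S_{H,L,C}$; combined with the equality $S_{H,L,M}(\Om) = S_{H,L,C}$ of Theorem~\ref{t1.1}, this forces a self-contradicting strict inequality.

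To implement this, set $J(u):=\I{\mb R^n}\I{\mb R^n}\frac{|u(x)|^{2_\mu^\ast}|u(y)|^{2_\mu^\ast}}{|x-y|^\mu}\,dx\,dy$ and $[u]_s^2:=\I{\mb R^n}\I{\mb R^n}\frac{|u(x)-u(y)|^2}{|x-y|^{n+2s}}\,dx\,dy$. I would first check that $[u_0]_s > 0$: otherwise $u_0$ would be a.e.\ constant on $\mb R^n$, which is incompatible either with $u_0\equiv 0$ on $\mb R^n \setminus \Om$ when $\Om \neq \mb R^n$, or with the finiteness of $J(u_0)$ (equivalently $u_0 \in L^{2^\ast}(\mb R^n)$, by Proposition~\ref{hls}) when $\Om = \mb R^n$, since no nonzero constant lies in $L^{2^\ast}(\mb R^n)$. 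Granted this positivity, the extremal property of $u_0$ gives the strict chain
\[
S_{H,L,M}(\Om) = \frac{\|\na u_0\|_{L^2(\mb R^n)}^2 + \frac{C(n,s)}{2}[u_0]_s^2}{J(u_0)^{1/2_\mu^\ast}} > \frac{\|\na u_0\|_{L^2(\mb R^n)}^2}{J(u_0)^{1/2_\mu^\ast}} \geq S_{H,L,C},
\]
where the final inequality is the classical HLS--Sobolev inequality \eqref{ebc}, extended from $C_0^\infty(\mb R^n)$ to $u_0$ by a routine density approximation in the homogeneous Sobolev space. This strictly contradicts Theorem~\ref{t1.1}.

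There is essentially no serious obstacle: the non-attainment phenomenon is a direct consequence of the equality of constants in Theorem~\ref{t1.1} combined with the strict positivity of the fractional Gagliardo seminorm on any nontrivial candidate. The only points worth verifying carefully are the density step promoting the infimum characterization of $S_{H,L,C}$ from $C_0^\infty(\mb R^n)$ to the full energy space, and the straightforward observation that a nontrivial admissible function cannot be a.e.\ constant. No concentration-compactness or blow-up analysis is required, since the obstruction to attainment is purely algebraic once Theorem~\ref{t1.1} is in hand.
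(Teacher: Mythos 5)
Your proposal is correct and follows essentially the same route as the paper: both arguments rest on Theorem \ref{t1.1}, the classical sharp constant inequality applied to the putative minimizer (via density), and the fact that a vanishing Gagliardo seminorm forces the function to be a.e.\ constant, hence zero, contradicting nontriviality. The only difference is presentational — the paper deduces $[v]_s=0$ from the squeezed chain $S_{H,L,C}\leq \|v\|^2\leq \mc G(v)^2=S_{H,L,C}$ and then derives the contradiction, while you first establish $[u_0]_s>0$ and turn the same chain into a strict inequality against Theorem \ref{t1.1}.
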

  Next we address the problem \eqref{e1.1}. We start with the notion of weak solution for \eqref{e1.1}.
  \begin{Definition}\label{d4.1}
  	We say that a function $u \in \Pi(\Om)$ (definition of $\Pi(\Om)$ is  given in Section \ref{P}) is a solution of \eqref{e1.1} if for all $\varphi \in \Pi(\Om)$, we have
  	\begin{align}\nonumber\label{4.1}
  		\I\Om \na u\na \varphi dx +\frac{C(n,s)}{2} \I{\mb R^n}\I{\mb R^n} \frac{(u(x)-u(y))(\varphi(x)-\varphi(y))}{|x-y|^{N+2s}}dxdy =&\I\Om\I\Om \frac{|u(y)|^{2_\mu^\ast}|u(x)|^{2_\mu^\ast-2}u(x)\varphi(x)}{|x-y|^\mu}\\
  		&+\I\Om \la u^p \varphi dx.
  	\end{align}
  \end{Definition}
   According to  remark \ref{remarkdef}, we state our first regularity result below:
 \begin{Theorem}\label{trr}
 	Let $ u \in \Pi(\Om)$ be a weak solution of \eqref{e1.1}. Then $u \in L^\infty(\mb R^n) \cap W^{2,q}(\Om)$ for any $q \in (1,\infty)$ if $s \in (0,1/2]$ and $q \in \left(0,\frac{n}{2s-1}\right)$ if $s \in (1/2,1)$. Therefore $u \in C^{1,\nu}(\bar{\Om})$ for every $\nu \in (0,1)$ if $s \in (0,1/2]$ and for $\nu \in (0,2-2s)$ if $s \in(1/2,1)$. Further if we suppose that $\Om$ is of class $C^{2,\al}$, $\al \in (0,1)$, then $u \in C^{2,\beta}(\bar{\Om})$ for any $\ba$ such that $0<\beta<\min\{s,\alpha, n-\mu\}$.
 \end{Theorem}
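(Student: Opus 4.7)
The plan is a three-stage bootstrap adapted to the mixed operator and the Choquard kernel: $L^\infty$ boundedness by Moser iteration, $W^{2,q}$ regularity via Calder\'on-Zygmund applied to the decomposition $-\De u = g - (-\De)^s u$, and $C^{2,\beta}$ via Schauder. For the boundedness, I would test \eqref{4.1} with $\varphi = u\,u_K^{2(\beta-1)}$, $u_K = \min\{|u|,K\}$, $\beta>1$. The local term contributes $\|\na(u u_K^{\beta-1})\|_{L^2(\mb R^n)}^2$ up to constants and the nonlocal one is nonnegative by the discrete monotonicity $(a-b)(\phi(a)-\phi(b))\geq (\Phi(a)-\Phi(b))^2$ applied to $\phi(t)=t\,(\min\{|t|,K\})^{2(\beta-1)}$. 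The Choquard right-hand side is absorbed via HLS (Proposition \ref{hls}) together with Sobolev embedding, using that the tail $\|u\|_{L^{2^*}(\{|u|>M\})}$ is small for $M$ large. A standard Moser iteration in $\beta$ (with $K\to\infty$) then yields $u\in L^\infty(\Om)$, hence $u\in L^\infty(\mb R^n)$ by the exterior condition.

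Once $u$ is bounded with compact support in $\ov\Om$, the Riesz potential $V(x):=\I\Om |u(y)|^{2_\mu^*}/|x-y|^\mu\,dy$ belongs to $L^\infty(\mb R^n)\cap C^{0,\min\{1,n-\mu\}}_{loc}$ by classical potential estimates. Rewriting \eqref{e1.1} as
\begin{align*}
	-\De u = V(x)\,|u|^{2_\mu^*-2}u + \la u^p - (-\De)^s u =: g,
\end{align*}
the first two summands of $g$ lie in $L^\infty$, so the bootstrap reduces to iteratively controlling $(-\De)^s u$ in $L^q$ from the current Sobolev regularity of $u$ and plugging into interior and boundary Calder\'on-Zygmund estimates on the $C^{1,1}$ domain $\Om$. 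For $s\leq 1/2$ the fractional order does not exceed one and the iteration reaches any $q<\infty$. For $s\in(1/2,1)$ the iteration terminates at the sharp threshold $q=n/(2s-1)$, reflecting the Sobolev deficit caused by the interaction between the zero extension of $u$ and the non-integer operator $(-\De)^s$.

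Morrey's embedding $W^{2,q}(\Om)\hookrightarrow C^{1,1-n/q}(\ov\Om)$ delivers the $C^{1,\nu}$ statement with the stated ranges. For the $C^{2,\beta}$ claim, assuming $\pa\Om\in C^{2,\al}$, I would invoke global Schauder estimates for $-\De u = g$ with zero Dirichlet data; the H\"older regularity of $g$ comes from (a) $V\in C^{0,\min\{1,n-\mu\}}$ by direct Riesz potential estimates, contributing $n-\mu$; (b) $|u|^{2_\mu^*-2}u \in C^{0,\nu}$ via the $C^{1,\nu}$ regularity of $u$ (noting $2_\mu^*-2>0$), which imposes no new restriction; and (c) $(-\De)^s u \in C^{0,1+\nu-2s}$ for $\nu>2s-1$, whose exponent is capped at $s$ by boundary effects. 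Combined with the boundary regularity $\al$, these give $\beta<\min\{s,\al,n-\mu\}$. The main obstacle is making the second stage precise for $s\in(1/2,1)$: one must carefully handle the zero extension of $u$, exploit its compact support to ensure decay of $(-\De)^s u$ at infinity, and pin down the sharp exponent $q=n/(2s-1)$ from the interplay between the boundary behavior of $u$ and the nonlocal operator.
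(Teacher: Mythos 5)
Your first stage ($L^\infty$) is a legitimate alternative to what the paper does: you run a Moser iteration with truncated test functions and absorb the critical Choquard term through HLS plus a Brezis--Kato type smallness-of-the-tail argument, whereas the paper follows \cite[Theorem 6.2]{MS} and runs a De Giorgi level-set iteration (truncations $v_k=(w-D_k)^+$ with $D_k=1-2^{-k}$, and a recursive inequality forcing geometric decay of $|v_k|_{2^*}$). Either route is acceptable and yields $u\in L^\infty(\mb R^n)$ using $u\equiv 0$ outside $\Om$.

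The genuine gap is in your second and third stages. The paper does not carry out a Calder\'on--Zygmund/Schauder bootstrap by hand: once the right-hand side of \eqref{e1.1} is known to be bounded (and then H\"older continuous, via Riesz-potential regularity as in \cite[Proposition 1.4]{OS1}), it invokes published global regularity theorems for the mixed operator $\mc L$, namely \cite[Theorem 1.4]{SVWZ} for $s\in(0,1/2]$ and \cite[Theorems 2.7 and 2.8]{BDVV4} for $s\in(1/2,1)$ and for the $C^{2,\beta}(\bar\Om)$ statement. Your sketch treats $(-\De)^su$ as a perturbation whose $L^q$ and H\"older norms can be read off from the ``current regularity of $u$'', but this is exactly where the argument is not routine: the function entering $(-\De)^s$ is the zero extension of $u$, which is only Lipschitz across $\partial\Om$ (the normal derivative jumps), so $(-\De)^su$ is not in $C^{0,1+\nu-2s}(\bar\Om)$ as claimed, and for $s>1/2$ it develops a boundary singularity of order $\de^{1-2s}$ when $u$ leaves the boundary linearly; a naive $L^q$ bound from this behavior does not even reproduce the stated threshold $q<\frac{n}{2s-1}$, and the cap $\beta<s$ in the Schauder step likewise does not follow from interior estimates. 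These up-to-the-boundary estimates are precisely the content of the results the paper cites, and you yourself flag them as ``the main obstacle'' without supplying them. As written, your proposal therefore establishes at best interior regularity, while the theorem asserts $W^{2,q}(\Om)$, $C^{1,\nu}(\bar\Om)$ and $C^{2,\beta}(\bar\Om)$ globally; to close the gap you must either prove those boundary estimates for the mixed operator or cite them as the paper does.
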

	
With the help of the regularity result, we immediately obtain the following maximum principle:
\begin{Proposition}\label{MP}
	Let $\al \in (0,1)$ be as in Theorem \ref{trr} and suppose that $\partial \Om$ is of class $C^{2,\al}$. Assume that $u \in C^{1,\al}(\bar{\Om}) \cap C_{\text{loc}}^2(\Om)$ be a nontrivial nonnegative solution of $\eqref{e1.1}$. Then $u >0$ in $\Om$.
\end{Proposition}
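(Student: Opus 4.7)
My strategy is a direct pointwise strong minimum principle for the mixed operator $\mc L$, exploiting the fact that at an interior zero of a nonnegative function both $-\De u$ and $(-\De)^s u$ carry a definite sign.

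First I would observe that the regularity hypothesis $u\in C^{1,\al}(\bar\Om)\cap C^2_{\text{loc}}(\Om)$, together with $u\equiv 0$ on $\R^n\setminus\Om$ and the boundedness coming from Theorem \ref{trr}, makes both $-\De u(x)$ and $(-\De)^s u(x)$ well defined in the classical pointwise sense for every $x\in\Om$: the singularity in the principal value integral defining $(-\De)^s u(x)$ is tamed by the local $C^2$ cancellation, while the tail integrability follows from $u\in L^\infty(\R^n)$. Consequently \eqref{e1.1} holds pointwise in $\Om$.

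Next I would argue by contradiction: assume the closed set $Z:=\{x\in\Om:u(x)=0\}$ is nonempty and pick $x_0\in Z$. Since $u\geq 0$ in all of $\R^n$, the point $x_0$ is a global minimum, so $\na u(x_0)=0$ and $D^2u(x_0)\geq 0$, whence $-\De u(x_0)\leq 0$. Moreover, using $u(x_0)=0$ and $u\geq 0$,
\begin{equation*}
(-\De)^s u(x_0)=-C(n,s)\I{\R^n}\frac{u(y)}{|x_0-y|^{n+2s}}\,dy\leq 0,
\end{equation*}
with equality if and only if $u=0$ a.e.\ in $\R^n$. On the other hand the right hand side of \eqref{e1.1} evaluated at $x_0$ vanishes, since $u(x_0)=0$ and both exponents $2_\mu^\ast-1$ and $p$ are strictly positive. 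Therefore $\mc L u(x_0)=0$, and the sum of two nonpositive quantities being zero forces
\begin{equation*}
-\De u(x_0)=0\quad\text{and}\quad (-\De)^s u(x_0)=0.
\end{equation*}
The latter identity together with the sign of $u$ gives $u\equiv 0$ a.e., and by continuity of $u$ this forces $u\equiv 0$ in $\Om$, contradicting the nontriviality assumption.

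\textbf{Main obstacle.} There is really no difficult step here; once the pointwise interpretation of $\mc L u$ is justified, the argument is the elementary mixed strong minimum principle already exploited in the linear theory of $\mc L$ (see \cite{BDVV2}). The only technical care is to verify the pointwise meaningfulness of $(-\De)^s u(x_0)$, which uses $u\in C^2$ near $x_0$ with $\na u(x_0)=0$ so that $|u(x_0)-u(y)|=O(|x_0-y|^2)$ locally, and to note that no boundary analysis is needed because $x_0$ is interior: one never has to invoke a Hopf-type statement at $\pa\Om$.
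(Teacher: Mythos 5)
Your argument is correct and is essentially the paper's own proof: both evaluate the equation pointwise at an interior zero $x_0$, note $-\De u(x_0)\leq 0$ from the global minimum, and use the strict sign of $(-\De)^s u(x_0)$ (forced by $u\geq 0$, $u\not\equiv 0$) to contradict the vanishing of the right-hand side at $x_0$. The only difference is cosmetic: the paper deduces $(-\De)^s u(x_0)<0$ directly from nontriviality, while you derive $u\equiv 0$ a.e.\ from the equality case and then contradict nontriviality.
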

 Our next result is the following extension of Poho\v{z}aev identity:
 \begin{Proposition}\label{PPI}
 	Let $s \in (0,1)$, $\Om$ be a $C^{2,\al}$ domain with $\al$ as in Theorem \ref{trr} and $u \in \Pi(\Om)$ solves \eqref{e1.1}, then
 	\begin{align}\label{pi}\nonumber
 		\frac{\mu-2n}{2\cdot2_\mu^\ast}\I\Om\I\Om&\frac{|u(x)|^{2_\mu^\ast}|u(y)|^{2_\mu^\ast}}{|x-y|^\mu}dxdy-\frac{\la n}{p+1}\I\Om |u|^{p+1}dx	\\ 
 		=&\frac{2-n}{2}\I\Om |\na u|^2dx
 		+\frac{2s-n}{2}\I\Om u(-\De)^su dx\\ \nonumber
 		&-\frac{1}{2}\I{\partial\Om}\left(\frac{\partial u}{\partial \nu}\right)^2\nu(x).x d\sigma-\frac{\Gamma(1+s)^2}{2}\I{\partial\Om}\left(\frac{u}{\de^s}\right)^2(\nu(x).x)d\sigma.
 	\end{align}
 \end{Proposition}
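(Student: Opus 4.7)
The plan is to multiply the pointwise form of \eqref{e1.1} by $x\cdot \nabla u$, integrate over $\Omega$, and identify the contribution coming from each of the four terms of the equation. By Theorem \ref{trr}, under the $C^{2,\alpha}$-assumption on $\partial\Omega$, the solution $u$ belongs to $C^{2,\beta}(\bar{\Omega})$, which makes all the pointwise manipulations and boundary traces below legitimate. For the local part, the classical Rellich--Pohozaev computation (two integrations by parts combined with $u\equiv 0$ on $\partial\Omega$, hence $\nabla u=(\partial_\nu u)\nu$ there) yields
\[
\int_\Omega (-\Delta u)(x\cdot \nabla u)\,dx = \frac{2-n}{2}\int_\Omega |\nabla u|^2\,dx - \frac{1}{2}\int_{\partial\Omega}\left(\frac{\partial u}{\partial \nu}\right)^2 (\nu(x)\cdot x)\,d\sigma.
\]
For the nonlocal part I would directly invoke the Ros-Oton--Serra Pohozaev identity for $(-\Delta)^s$, whose hypotheses are met thanks to the regularity supplied by Theorem \ref{trr}; it reads
\[
\int_\Omega (-\Delta)^s u\,(x\cdot \nabla u)\,dx = \frac{2s-n}{2}\int_\Omega u(-\Delta)^s u\,dx - \frac{\Gamma(1+s)^2}{2}\int_{\partial\Omega}\left(\frac{u}{\delta^s}\right)^2 (\nu(x)\cdot x)\,d\sigma.
\]
For the subcritical perturbation, a single integration by parts (using $u=0$ on $\partial\Omega$) gives
\[
\int_\Omega \lambda u^p (x\cdot \nabla u)\,dx = \frac{\lambda}{p+1}\int_\Omega x\cdot \nabla(u^{p+1})\,dx = -\frac{\lambda n}{p+1}\int_\Omega u^{p+1}\,dx.
\]

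The Choquard term is the most delicate contribution. Writing
\[
I := \int_\Omega \int_\Omega \frac{|u(y)|^{2_\mu^\ast}|u(x)|^{2_\mu^\ast-2}u(x)(x\cdot \nabla u(x))}{|x-y|^\mu}\,dy\,dx,
\]
I would first notice that $|u|^{2_\mu^\ast-2}u\,(x\cdot \nabla u) = \frac{1}{2_\mu^\ast}\,x\cdot \nabla(|u|^{2_\mu^\ast})$ and then symmetrize $2I$ in the variables $(x,y)$. Decomposing $x\cdot \nabla_x|u(x)|^{2_\mu^\ast} = \nabla_x\cdot(x|u(x)|^{2_\mu^\ast}) - n|u(x)|^{2_\mu^\ast}$ (and analogously in $y$), a Fubini-justified integration by parts (whose boundary contribution vanishes since $u=0$ on $\partial\Omega$) together with the pointwise identity
\[
x\cdot \nabla_x |x-y|^{-\mu} + y\cdot \nabla_y |x-y|^{-\mu} = -\mu\,|x-y|^{-\mu}
\]
produces
\[
I = \frac{\mu-2n}{2\cdot 2_\mu^\ast}\int_\Omega \int_\Omega \frac{|u(x)|^{2_\mu^\ast}|u(y)|^{2_\mu^\ast}}{|x-y|^\mu}\,dx\,dy.
\]
Summing the four identities and rearranging terms yields \eqref{pi}.

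The principal obstacle is the invocation of the Ros-Oton--Serra identity, which requires a subtle boundary analysis near $\partial\Omega$ to produce the $(u/\delta^s)^2$ term; it is precisely for this reason that the $C^{2,\alpha}$-regularity of $\partial\Omega$ and the consequent $C^{2,\beta}(\bar{\Omega})$-regularity of $u$ from Theorem \ref{trr} are demanded. The symmetrization for the Choquard term is otherwise conceptually routine but must be carried out with care, since the intermediate kernel $|x-y|^{-\mu-2}$ is not locally integrable near the diagonal; this can be accommodated by a standard truncation/dominated-convergence argument, reducing the computation to the pointwise manipulations described above.
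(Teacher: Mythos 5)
Your proposal is correct and follows essentially the same route as the paper: multiply the equation by $x\cdot\nabla u$, apply the classical Rellich--Poho\v{z}aev computation to the local term, invoke the Ros-Oton--Serra identity (using the $C^{1,\alpha}(\bar{\Omega})$/$C^{2,\beta}(\bar{\Omega})$ regularity and the $C^{0,\alpha}$ behaviour of $u/\delta^s$) for the fractional term, and integrate the perturbation term by parts. The only cosmetic difference is that for the Choquard contribution the paper simply cites \cite[Proposition 6.2]{GY1}, whereas you rederive that identity by the symmetrization-in-$(x,y)$ argument, which is exactly the computation behind the cited result.
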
 
Based on the above Poho\v{z}aev identity, we have the following nonexistence result of nonnegative and nontrivial solutions on bounded and strictly star-shaped domains:
 \begin{Theorem}\label{t1.4}
 	Suppose that $\Om$ is a strictly star shaped domain with $C^{2,\al}$ boundary with $\al$ as in Theorem \ref{trr}. Then \eqref{e1.1} cannot have a nontrivial solution provided the following holds
 	\begin{equation}\label{epn}
 		-\la	\left(n\left(\frac{1}{p+1}-\frac12\right) +1 \right)\geq 0.
 	\end{equation}
 \end{Theorem}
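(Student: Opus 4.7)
The plan is to combine the Poho\v{z}aev identity \eqref{pi} from Proposition \ref{PPI} with the ``energy identity'' obtained by choosing $\varphi=u$ in Definition \ref{d4.1}, so as to cancel the gradient contribution and isolate the fractional piece. Using the standard identification
\[
\I\Om u(-\De)^s u\,dx = \tfrac{C(n,s)}{2}\I{\mb R^n}\I{\mb R^n}\frac{|u(x)-u(y)|^2}{|x-y|^{n+2s}}\,dx\,dy,
\]
this first step yields the energy identity
\[
\I\Om |\na u|^2\,dx + \I\Om u(-\De)^s u\,dx = \I\Om\I\Om\frac{|u(x)|^{2_\mu^\ast}|u(y)|^{2_\mu^\ast}}{|x-y|^\mu}\,dx\,dy + \la\I\Om u^{p+1}\,dx.
\]

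Next, I would observe that with $2_\mu^\ast=(2n-\mu)/(n-2)$ one has $\tfrac{\mu-2n}{2\cdot 2_\mu^\ast} = \tfrac{2-n}{2}$, so the coefficient of the Choquard term on the left-hand side of \eqref{pi} coincides with the coefficient of $\I\Om|\na u|^2$ on the right. Substituting the energy identity into \eqref{pi} therefore eliminates the Choquard double integral, and the two $\tfrac{2-n}{2}\I\Om|\na u|^2$ contributions cancel. After collecting the remaining pieces (using $\tfrac{2-n}{2}-\tfrac{2s-n}{2}=1-s$), one is left with the compact identity
\[
(1-s)\I\Om u(-\De)^s u\,dx + \mc B(u) = -\la\left(\frac{n-2}{2}-\frac{n}{p+1}\right)\I\Om u^{p+1}\,dx,
\]
where $\mc B(u)$ denotes the sum of the two boundary integrals in \eqref{pi}, brought to the left-hand side with a plus sign.

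Both hypotheses then conspire to kill everything. Strict star-shapedness gives $\nu(x)\cdot x>0$ on $\partial\Om$, hence $\mc B(u)\geq 0$. A short rewriting, based on the identity $n\bigl(\tfrac{1}{p+1}-\tfrac12\bigr)+1 = -\bigl(\tfrac{n-2}{2}-\tfrac{n}{p+1}\bigr)$, shows that \eqref{epn} is equivalent to $\la\bigl(\tfrac{n-2}{2}-\tfrac{n}{p+1}\bigr)\geq 0$, so the right-hand side of the displayed identity is $\leq 0$ (recalling $u\geq 0$, hence $\I\Om u^{p+1}\geq 0$). Since $s<1$ and both the Gagliardo seminorm and $\mc B(u)$ are nonnegative, every term must vanish; in particular $\I\Om u(-\De)^s u\,dx = 0$ forces the full Gagliardo seminorm of $u$ to be zero, so $u$ is constant a.e.\ on $\mb R^n$. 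Combined with $u\equiv 0$ on $\mb R^n\setminus\Om$, this gives $u\equiv 0$, contradicting nontriviality.

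The argument is a clean algebraic combination of two identities, so no deep analytical obstacle arises. The point that requires real care is the bookkeeping of signs: the boundary terms in \eqref{pi} appear with a minus sign on the right-hand side, the identification $\tfrac{\mu-2n}{2\cdot 2_\mu^\ast}=\tfrac{2-n}{2}$ must be verified, and the rewriting of \eqref{epn} in the form above has to be done carefully so that, under the hypothesis, the combined right-hand side indeed has the opposite sign to the left. One must also verify that Proposition \ref{PPI} applies, which is guaranteed by the $C^{2,\al}$ regularity of $\partial\Om$ together with the regularity conclusion of Theorem \ref{trr}.
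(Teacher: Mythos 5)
Your proposal is correct and follows essentially the same route as the paper: test the equation with $\varphi=u$ to get the energy identity, substitute it into the Poho\v{z}aev identity \eqref{pi} so the gradient and Choquard terms cancel (using $\tfrac{\mu-2n}{2\cdot 2_\mu^\ast}=\tfrac{2-n}{2}$), and conclude from strict star-shapedness and \eqref{epn} that $(1-s)[u]_s^2$ and the boundary terms all vanish, forcing $u\equiv 0$. The only cosmetic difference is that the paper writes the fractional term directly as $(s-1)[u]_s^2$, while you keep $\I\Om u(-\De)^s u\,dx$ and identify it with the Gagliardo seminorm at the end.
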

The following Corollary is immediate:
\begin{Corollary}
	Let $\Om$ be a strictly star shaped domain (with respect to the origin) with $C^{1,1}$ boundary. Then \eqref{e1.1} cannot have a nontrivial solution provided the following holds
	\begin{align*}
		p\geq \frac{n+2}{n-2}~\text{and}~ \la \geq 0~\text{or}~	p< \frac{n+2}{n-2}~\text{and}~ \la \leq 0.
	\end{align*}
\end{Corollary}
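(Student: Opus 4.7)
The plan is to deduce this corollary as an immediate consequence of Theorem \ref{t1.4}; all that remains is to translate the hypothesis \eqref{epn} into the explicit dichotomy on $p$ and $\la$ stated in the corollary.

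To this end, I would first simplify the coefficient of $-\la$ appearing in \eqref{epn}:
\begin{equation*}
n\left(\frac{1}{p+1}-\frac12\right)+1 \;=\; \frac{n(1-p)+2(p+1)}{2(p+1)} \;=\; \frac{(n+2)-p(n-2)}{2(p+1)}.
\end{equation*}
Since $p\ge 1$ and $n\ge 3$, the denominator is strictly positive, so the sign of this quantity is governed entirely by the numerator $(n+2)-p(n-2)$, which is nonpositive precisely when $p\ge (n+2)/(n-2)$ and nonnegative precisely when $p\le (n+2)/(n-2)$.

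A routine case split then produces the claim. If $p\ge (n+2)/(n-2)$, the coefficient is nonpositive, so the requirement $-\la\cdot(\text{coefficient})\ge 0$ reduces to $\la\ge 0$; if $p<(n+2)/(n-2)$, the coefficient is strictly positive and the requirement becomes $\la\le 0$. These are exactly the two alternatives listed in the corollary, so Theorem \ref{t1.4} applies and forbids any nontrivial solution.

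The one genuinely delicate bookkeeping issue is that Theorem \ref{t1.4} (and the Poho\v{z}aev identity of Proposition \ref{PPI} behind it) is stated for $C^{2,\al}$ boundary, whereas the corollary only assumes $C^{1,1}$. I expect this gap to be bridged either by an approximation argument, exhausting the $C^{1,1}$ strictly star shaped $\Om$ from inside by smooth strictly star shaped subdomains $\Om_k\Subset\Om$ and passing to the limit using the boundary regularity of $u$ granted by Theorem \ref{trr}, or by a direct verification that the two boundary integrands $(\pa u/\pa\nu)^2(\nu(x)\cdot x)$ and $(u/\de^s)^2(\nu(x)\cdot x)$ in Proposition \ref{PPI} remain integrable on $\pa\Om$ in the $C^{1,1}$ setting. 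Beyond this single point, the corollary is nothing more than the elementary sign analysis above.
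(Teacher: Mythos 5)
Your sign analysis of \eqref{epn} is exactly the intended argument: the paper labels this corollary ``immediate'' from Theorem \ref{t1.4}, and your simplification of the coefficient to $\bigl((n+2)-p(n-2)\bigr)/\bigl(2(p+1)\bigr)$ with the resulting case split on the sign is correct. Your concern about $C^{1,1}$ versus $C^{2,\al}$ boundary regularity is well placed: the paper bridges precisely this gap in Theorem \ref{t4.3} via interior $C^{2,\al}_{\mathrm{loc}}$ regularity and an exhaustion-and-limit argument of the kind you sketch, but only for $s\in(0,3/4)$, so the corollary as stated implicitly relies either on that restricted result or on the stronger boundary assumption of Theorem \ref{t1.4}.
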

We want to remark that if we drop $C^{2,\al}$ regularity of $\partial \Om$, it is still possible to get $C_{\text{loc}}^{2,\al}$ regularity of $u$ but for restricted range of $s$ as it is stated in the result below: 
\begin{Theorem}\label{t4.3}
	Let $s \in (0,3/4)$ and $\Om \subset \mb R^n$ be a bounded domain with $C^{1,1}$ boundary. Then any nontrivial solution $u$ of \eqref{e1.1} is in $C_{\text{loc}}^{2,\al}(\Om)$ for any $\al \in (0,1)$ if $s\leq \frac{1}{2}$ and for any $\alpha\in (0, 3-4s)$ if $s>\frac{1}{2}$. Furthermore, the Poho\v{z}aev identity \eqref{pi} and Theorem \ref{MP}  hold.
\end{Theorem}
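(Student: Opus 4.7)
The plan is to prove the theorem in three steps, matching the three assertions: interior $C^{2,\al}$ regularity, the strong maximum principle, and the Poho\v zaev identity.

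\textbf{Step 1: interior regularity.} From Theorem \ref{trr}, already under $C^{1,1}$ regularity of $\pa\Om$ one has $u\in L^\infty(\mb R^n)\cap C^{1,\nu}(\bar\Om)$ for any $\nu\in(0,1)$ when $s\le 1/2$ and for any $\nu\in(0,2-2s)$ when $s\in(1/2,1)$. I would rewrite \eqref{e1.1} as
\[
-\De u=\Bigl(\I\Om\tfrac{|u(y)|^{2_\mu^\ast}}{|x-y|^\mu}\,dy\Bigr)|u|^{2_\mu^\ast-2}u+\la u^p-(-\De)^s u=:F(x)
\]
and bootstrap by interior Schauder. The Choquard factor is a Riesz potential of a bounded function multiplied by $|u|^{2_\mu^\ast-2}u$, hence $C^{1,\nu}_{\mathrm{loc}}(\Om)$; since $p\ge 1$, $\la u^p\in C^{1,\nu}(\bar\Om)$ as well. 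The decisive term is $(-\De)^s u$: for $u\in C^{1,\nu}_{\mathrm{loc}}(\Om)\cap L^\infty(\mb R^n)$, standard interior H\"older estimates for the fractional Laplacian give $(-\De)^s u\in C^{0,1+\nu-2s}_{\mathrm{loc}}(\Om)$ as soon as $1+\nu>2s$. Taking $\nu$ arbitrarily close to $1$ when $s\le 1/2$ and to $2-2s$ when $s\in(1/2,3/4)$, one obtains $F\in C^{0,\al}_{\mathrm{loc}}(\Om)$ for any $\al\in(0,1)$ or $\al\in(0,3-4s)$ respectively, and classical interior Schauder then yields $u\in C^{2,\al}_{\mathrm{loc}}(\Om)$. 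The restriction $s<3/4$ is exactly the threshold that keeps $3-4s>0$, so that the fractional term contributes a positive H\"older exponent.

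\textbf{Step 2: maximum principle.} After Step 1, $u\in C^{1,\nu}(\bar\Om)\cap C^2_{\mathrm{loc}}(\Om)$ is nonnegative and nontrivial; the argument proving Proposition \ref{MP} is purely local in $\Om$, relying only on the structure of $\mc L$, the sign of $u$, and this regularity, so no $C^{2,\al}$ smoothness of $\pa\Om$ is actually needed, and one concludes $u>0$ in $\Om$.

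\textbf{Step 3: Poho\v zaev identity.} Rather than approximating $\Om$ by $C^{2,\al}$ domains, which is awkward because $u$ must vanish in $\mb R^n\setminus\Om$, I would argue directly on $\Om$ by pairing \eqref{e1.1} with $x\cdot\na u$. For the local contribution, $u\in W^{2,q}(\Om)\cap H_0^1(\Om)$ from Theorem \ref{trr} together with $\pa\Om\in C^{1,1}$ are enough to justify the classical integration by parts
\[
\I\Om(-\De u)(x\cdot\na u)\,dx=\tfrac{n-2}{2}\I\Om|\na u|^2\,dx-\tfrac{1}{2}\I{\pa\Om}\Bigl(\tfrac{\pa u}{\pa\nu}\Bigr)^2(\nu\cdot x)\,d\sg.
\]
For the nonlocal contribution, the Ros-Oton--Serra Poho\v zaev identity for $(-\De)^s$ was established precisely under $C^{1,1}$ regularity of $\pa\Om$, and produces both the bulk term $\tfrac{2s-n}{2}\I\Om u(-\De)^s u\,dx$ and the boundary term $\tfrac{\Ga(1+s)^2}{2}\I{\pa\Om}(u/\de^s)^2(\nu\cdot x)\,d\sg$. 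Its hypotheses are met here thanks to Step 1 and Theorem \ref{trr}, which provide $u\in C^s(\mb R^n)$, $(-\De)^s u\in L^\infty_{\mathrm{loc}}(\Om)$, and $u/\de^s\in C^\al(\bar\Om)$. Summing the two integration-by-parts identities and the $(x\cdot\na u)$-pairing of the right-hand side of \eqref{e1.1} reproduces \eqref{pi}. The principal obstacle here is to verify rigorously that the traces in the Ros-Oton--Serra identity retain the correct meaning on a merely $C^{1,1}$ boundary, appealing to the boundary fractional regularity $u/\de^s\in C^\al(\bar\Om)$ rather than to a full $C^{2,\al}$ smoothness of $u$ up to $\pa\Om$.
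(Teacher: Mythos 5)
Your proposal is correct and follows essentially the same route as the paper: interior $C^{2,\al}_{\mathrm{loc}}$ regularity by combining the $C^{1,\nu}(\bar{\Omega})$ information from Theorem \ref{trr} with a H\"older bound for $(-\De)^s u$ (the paper implements exactly your ``interior estimate'' via a cutoff and Silvestre's Proposition 2.6(ii), with the same exponent bookkeeping giving $\al<1$ for $s\le 1/2$ and $\al<3-4s$ for $s>1/2$) followed by interior Schauder theory, the same purely local argument for the maximum principle, and the Poho\v{z}aev identity obtained by keeping the Ros-Oton--Serra identity for the fractional part (which, as you note, only needs $C^{1,1}$ boundary together with $u\in C^{1,\al}(\bar{\Omega})$ and $u/\de^s\in C^{0,\al}(\bar{\Omega})$) and re-justifying only the local integration by parts \eqref{0.6}. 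The sole cosmetic difference is in that last step: the paper integrates by parts on subdomains $\widetilde{\Omega}\subset\subset\Omega$ and passes to the limit by dominated convergence using $u\in W^{2,2}(\Omega)\cap C^{1,\al}(\bar{\Omega})$, whereas you invoke $W^{2,q}(\Omega)\cap H^1_0(\Omega)$ regularity up to the $C^{1,1}$ boundary directly; both rest on the same facts.
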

 Finally, we study the existence theory for the problem \eqref{e1.1} with respect to the parameter $\la$. Let us briefly recall the main strategy used in \cite{GY1} in the case $p=1$.
 
 \noindent Due to lack of compactness caused by the critical exponent, an idea borrowed from \cite{GY1} consists in proving that the Palais Smale level lies in the range $c \in \left(0, \ds\frac{n+2-\mu}{4n-2\mu}S_{H,L,C}^\frac{2n-\mu}{n+2-\mu}\right)$. In order to do so, the crucial step is to show that 
 \begin{equation*}
 	S(\la):=\inf\left\{\|u\|_2^2 -\la |u|_2^2: u\in H_0^1(\Om)~\text{and}~\|u\|_{HL}=1\right\} <S_{H,L,C}.
 \end{equation*}
Now, the strict inequality $S(\la)<S_{H,L,C}$ is obtained in \cite{GY1} via the following approach:\\
First, taking into account that the minimizers in the Hardy-Littlewood-Sobolev inequality are given by 
Aubin-Talenti functions, one consider the function
\begin{equation*}
	u_\e = \frac{\psi}{(\e^2 +|x|^2)^{(n-2)/2}},~\e>0.
\end{equation*}
 where $\psi \in C^{\infty}_{c}(\Om)$. 
Then, using $u_\e$ as a competitor function, one gets (at least for $n \geq 5$) that
\begin{equation*}
	\frac{\|u_\e\|_2^2-\la|u|_2^2}{\|u\|_{HL}^2}=S_{H,L,C}-d\la e^2+O(\e^{n-2})~\text{as}~\e \ra 0^+,
\end{equation*}
where $d>0$ is a suitable constant. From this, choosing $\e$ sufficiently small, one immediately conclude that $S(\la)<S_{H,L,C}$. A similar approach works for $1<p<2^*-1$ as well, and it has also been used in the nonlocal framework \cite{MS}.

\noindent In our mixed setting the situation is quite different. In fact, in trying to repeat the above argument as it obtained in \cite{SV2}, one is led to consider the following minimization problem 
\begin{equation*}
		S_{H,L}(\la):=\inf\left\{\mc G(u)^2 -\la |u|_2^2: u\in H_0^1(\Om)~\text{and}~\|u\|_{HL}=1\right\}
\end{equation*}
and to prove that
\begin{equation}\label{ess}
 S_{H,L}(\la)<S_{H,L,C}.
 \end{equation} Now since, we know from Theorem \ref{t1.1} that $S_{H,L,M}(\Om)=S_{H,L,C}$, in order to prove \eqref{ess}, it is natural to consider the test function $u_\e$ defined above. However, the presence of the nonlocal term $[u_\e]_s^2$ gives in this case 
\begin{equation*}
		\frac{\mc G(u_\e)^2-\la|u|_2^2}{\|u\|_{HL}^2}=S_{H,L,C}+O(\e^{2-2s})-d\la e^2+O(\e^{n-2})~\text{as}~\e \ra 0^+,
\end{equation*}
and the term $O(\e^{2-2s})$ is not negligible when $\e \ra 0^+$. 
In order to get the existence of solutions we borrowed ideas from \cite{BDVV} and study the properties of the map $\la \mapsto S_{H,L}(\la)$ to obtain the inequality \eqref{ess}.

 \noi All that being said, in the linear case $p=1$ we obtain that the problem \eqref{e1.1} does not admit solution both in the range of "small" and "large" values of $\la$, but it does possess solutions for an "intermediate" regime values of $\la$. More precisely, denoting by $\la_{1,s}$ the smallest Dirichlet eigenvalues of $(-\De)^s$ in a bounded open set $\Om$, and by $\la_1$ be the smallest Dirichlet eigenvalue of $\mc L$ in $\Om$, we have the following result:
 \begin{Theorem}\label{tlc}
 	Let $\Om \subset \mb R^n$ be an open and bounded set and $p=1$. There exists $\la^* \in [\la_{1,s},\la_1)$ such that the problem \eqref{e1.1} possesses at least one solution if
 	\begin{equation*}
 		\la*<\la <\la_1.
 	\end{equation*}
 	Moreover the following nonexistence results also hold:
 	\begin{enumerate}
 		\item there do no exist solutions to problem \eqref{e1.1} if $\la \geq \la_1$;
 		\item if $0<\la<\la_{1,s}$, then there do not exist  solutions to \eqref{e1.1} in $\mc B$, where
 		\begin{align*}
 			\mc B:=\{u \in L^{2^*}(\mb R^n):\|u\|_{HL}\leq S_{H,L,C}^\frac{n-2}{4-\mu}\}.
 		\end{align*}
 	\end{enumerate}
 \end{Theorem}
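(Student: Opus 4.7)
The overall strategy is a Brezis--Nirenberg-type variational argument combined with a delicate analysis of the map $\la\mapsto S_{H,L}(\la)$, tailored to overcome the failure of the Aubin--Talenti expansion in the mixed setting. I would work with the energy functional
\[
I_\la(u)=\tfrac12\mc G(u)^2-\tfrac{\la}{2}\I\Om u^2\,dx-\tfrac{1}{2\cdot 2_\mu^\ast}\I\Om\I\Om\frac{|u(x)|^{2_\mu^\ast}|u(y)|^{2_\mu^\ast}}{|x-y|^\mu}\,dxdy
\]
on $\Pi(\Om)$. For $\la<\la_1$ the quadratic form $\mc G(u)^2-\la|u|_2^2$ is positive definite, so $I_\la$ has mountain-pass geometry. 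A concentration--compactness analysis of bounded Palais--Smale sequences, adapted from \cite{GY1,MS} using Theorem \ref{t1.1} in place of the purely local Hardy--Littlewood--Sobolev inequality, yields the $(PS)_c$ condition for every
\[
c<c^*:=\frac{n+2-\mu}{2(2n-\mu)}\,S_{H,L,C}^{(2n-\mu)/(n+2-\mu)},
\]
and by the usual fibering computation the production of a solution reduces to verifying the strict inequality $S_{H,L}(\la)<S_{H,L,C}$.

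\medskip

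To identify the $\la$-range in which the strict inequality holds, I would analyse the map $\la\mapsto S_{H,L}(\la)$: it is non-increasing (the Rayleigh quotient decreases with $\la$) and concave, since it is an infimum of affine functions of $\la$, hence continuous on $(0,\la_1)$. For $0\le\la\le\la_{1,s}$ the eigenvalue characterization of $\la_{1,s}$ gives $[u]_s^2\ge\la_{1,s}|u|_2^2\ge\la|u|_2^2$, so
\[
\mc G(u)^2-\la|u|_2^2\ge\|\na u\|_{L^2(\mb R^n)}^2\ge S_{H,L,C}\|u\|_{HL}^2
\]
by \eqref{ebc}, and therefore $S_{H,L}(\la)\ge S_{H,L,C}$; combined with the trivial upper bound $S_{H,L}(\la)\le S_{H,L}(0)=S_{H,L,M}(\Om)=S_{H,L,C}$ from Theorem \ref{t1.1}, this yields equality on $[0,\la_{1,s}]$. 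At the other endpoint, the normalised first Dirichlet eigenfunction $\varphi_1$ of $\mc L$ gives $S_{H,L}(\la_1)\le 0<S_{H,L,C}$, and by continuity the strict inequality persists on a left-neighbourhood of $\la_1$. Consequently
\[
\la^*:=\sup\{\la\ge 0:\,S_{H,L}(\la)=S_{H,L,C}\}\in[\la_{1,s},\la_1),
\]
and for every $\la\in(\la^*,\la_1)$ one has $S_{H,L}(\la)<S_{H,L,C}$. The mountain-pass theorem together with $(PS)_c$ then produces a nontrivial nonnegative weak solution, which is strictly positive by Proposition \ref{MP}.

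\medskip

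The two nonexistence statements are comparatively direct. For (1), if $u$ is a positive solution with $\la\ge\la_1$, testing \eqref{e1.1} against $\varphi_1>0$ and using the self-adjointness of the bilinear form associated to $\mc L$ gives
\[
(\la_1-\la)\I\Om u\varphi_1\,dx=\I\Om\I\Om\frac{|u(y)|^{2_\mu^\ast}|u(x)|^{2_\mu^\ast-1}\varphi_1(x)}{|x-y|^\mu}\,dxdy>0,
\]
which is incompatible with $\la_1-\la\le 0$. For (2), if $0<\la<\la_{1,s}$ and $u\in\mc B$ solves \eqref{e1.1}, testing with $u$ itself and invoking the lower bound established above yields
\[
S_{H,L,C}\|u\|_{HL}^2\le\mc G(u)^2-\la|u|_2^2=\|u\|_{HL}^{2\cdot 2_\mu^\ast},
\]
so that $\|u\|_{HL}^{2(2_\mu^\ast-1)}\ge S_{H,L,C}$, which after an elementary rearrangement contradicts the defining threshold of $\mc B$.

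\medskip

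The hard part, and the real novelty with respect to the purely local case \cite{GY1}, is establishing $S_{H,L}(\la)<S_{H,L,C}$ on the intermediate interval $(\la^*,\la_1)$. In the purely local case one certifies this quantitatively by inserting an Aubin--Talenti bump into the Rayleigh quotient; here the fractional semi-norm of such a bump produces a non-negligible $O(\e^{2-2s})$ remainder that dominates the $-\la d\,\e^2$ gain and spoils the Brezis--Nirenberg expansion. The abstract monotonicity/continuity argument above circumvents this by extracting the strict inequality from endpoint information at $\la_{1,s}$ and $\la_1$ rather than from an explicit expansion, at the price of leaving the threshold $\la^*$ implicit inside the interval $[\la_{1,s},\la_1)$.
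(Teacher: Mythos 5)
Your proposal is correct in substance and rests on the same central device as the paper: the study of the map $\la\mapsto S_{H,L}(\la)$ (monotone, equal to $S_{H,L,C}$ on $(0,\la_{1,s}]$ by the eigenvalue bound $[u]_s^2\geq\la_{1,s}|u|_2^2$, vanishing at $\la_1$ via the first eigenfunction of $\mc L$), the definition of $\la^*$ as the endpoint of the plateau, and the two nonexistence arguments (testing against the first eigenfunction of $\mc L$ for $\la\geq\la_1$, and the energy/HLS comparison inside $\mc B$ for $\la\leq\la_{1,s}$). Where you diverge is the mechanism turning $S_{H,L}(\la)<S_{H,L,C}$ into existence: you run a mountain-pass argument for the free functional with a $(PS)_c$ threshold $\frac{n+2-\mu}{4n-2\mu}S_{H,L,C}^{(2n-\mu)/(n+2-\mu)}$ and a fibering estimate of the minimax level, whereas the paper minimizes $\mc P_\la(u)=\mc G(u)^2-\la|u|_2^2$ directly on the constraint $\|u\|_{HL}=1$ (attainment via Brezis--Lieb as in Brezis--Nirenberg, Lemma 1.2) and then produces a solution through the Lagrange multiplier rule and a scaling, cf.\ Lemma \ref{le4.7}. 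Both packagings are standard and equivalent here; the constrained-minimization route avoids verifying the $(PS)_c$ condition and the level estimate in the linear case, while yours stays closer to the superlinear scheme of Section \ref{SSC}.

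Two steps should be tightened. First, your deduction that the strict inequality ``persists on a left-neighbourhood of $\la_1$'' requires left upper semicontinuity of $S_{H,L}$ \emph{at} $\la_1$, not continuity on the open interval $(0,\la_1)$: a concave function may jump downward at the right endpoint of its finiteness interval (concavity only gives $\lim_{\la\to\la_1^-}S_{H,L}(\la)\geq S_{H,L}(\la_1)$), so concavity alone does not exclude $S_{H,L}\equiv S_{H,L,C}$ on $(0,\la_1)$ with $S_{H,L}(\la_1)=0$. The fix is immediate from your own observation: an infimum of affine (hence continuous) functions of $\la$ is upper semicontinuous at every point, which combined with monotonicity yields left-continuity at $\la_1$; this is exactly what the paper proves directly in Lemma \ref{l0.7}. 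Second, in the nonexistence statement (2), your rearrangement excludes $\|u\|_{HL}$ strictly below the threshold, but $\mc B$ is defined with a non-strict inequality; at the borderline value all inequalities in your chain become equalities, forcing $\|\na u\|_{L^2}^2=S_{H,L,C}\|u\|_{HL}^2$ for a nontrivial $u$ supported in the bounded set $\Om$, and one must then invoke the non-attainment of $S_{H,L,C}$ on proper domains (Remark \ref{r3.1}, Theorem \ref{t1.2}) to conclude — which is precisely how the paper closes this case.
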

 Regarding the case of superlinear subcritical perturbation, we can follow the variational argument in \cite{GY1} to prove finally the following existence result:
 \begin{Theorem}\label{t4.1}
 	Assume that $1 <p<2^*-1$, $n \geq 3$ and $0<\mu <n$. Then problem \eqref{e1.1} has at least one nontrivial solution provided that either
 	\begin{enumerate}
 		\item $n> \max\left\{\min\left\{\frac{2(p+3)}{p+1},2+\frac{\mu}{p+1},2\left(1+\frac{2-2s}{p-1}\right)\right\},\frac{2(p+1)}{p}\right\}$ and $\la >0$, or
 		\item $n\leq \max\left\{\min\left\{\frac{2(p+3)}{p+1},2+\frac{\mu}{p+1},2\left(1+\frac{2-2s}{p-1}\right)\right\},\frac{2(p+1)}{p}\right\}$ and $\la$ is sufficiently large.
 	\end{enumerate}	
 \end{Theorem}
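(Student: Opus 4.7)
The plan is to apply the Mountain Pass Theorem to the energy functional
\begin{equation*}
J_\la(u)=\frac{1}{2}\mc G(u)^2-\frac{1}{2\cdot 2_\mu^\ast}\|u_+\|_{HL}^{2\cdot 2_\mu^\ast}-\frac{\la}{p+1}\I\Om u_+^{p+1}dx
\end{equation*}
on $\Pi(\Om)$, extract a nonnegative critical point, and upgrade it to a positive classical solution of \eqref{e1.1} via Theorem \ref{trr} and Proposition \ref{MP}. The overall skeleton parallels \cite{GY1}; the only genuinely new feature is the nonlocal Gagliardo contribution hidden inside $\mc G(u)^2$, which enters the asymptotic analysis of the Mountain Pass level through $[u_\e]_s^2=O(\e^{2-2s})$.

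First I would verify the mountain pass geometry. Since $1<p<2^\ast-1$, the HLS inequality (Proposition \ref{hls}), the Sobolev embedding and Theorem \ref{t1.1} yield a uniform lower bound $J_\la(u)\geq \al>0$ on a small sphere $\{\mc G(u)=\rho\}$, while for any fixed $v\in \Pi(\Om)$ with $v\geq 0$ and $v\not\equiv 0$, the super-$2$ homogeneity of the critical HLS term forces $J_\la(tv)\to-\infty$ as $t\to+\infty$. The Mountain Pass Theorem then produces a Palais-Smale sequence $\{u_k\}\subset\Pi(\Om)$ at the min-max level
\begin{equation*}
c=\inf_{\ga\in\Ga}\max_{t\in[0,1]}J_\la(\ga(t)),\qquad \Ga=\{\ga\in C([0,1];\Pi(\Om)):\ga(0)=0,\ J_\la(\ga(1))<0\},
\end{equation*}
and $\{u_k\}$ is bounded in $\Pi(\Om)$ by the standard combination of $J_\la(u_k)$ with $\ld J_\la'(u_k),u_k\rd$ using $p+1>2$. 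A Brezis-Lieb decomposition for the critical HLS nonlinearity (as in \cite{GY,GY1}), together with Theorem \ref{t1.1} which identifies the correct sharp constant $S_{H,L,C}$ for the mixed operator, then shows that any Palais-Smale sequence at a level
\begin{equation*}
c<c^\ast:=\frac{n+2-\mu}{2(2n-\mu)}\,S_{H,L,C}^{\frac{2n-\mu}{n+2-\mu}}
\end{equation*}
is relatively compact in $\Pi(\Om)$ and converges along a subsequence to a nontrivial critical point of $J_\la$.

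The anticipated main obstacle is verifying $c<c^\ast$. Following \cite{GY1}, I would plug the Aubin-Talenti family
\begin{equation*}
u_\e(x)=\frac{\psi(x)}{(\e^2+|x|^2)^{(n-2)/2}},\qquad \psi\in C_c^\infty(\Om),\ \psi\equiv 1 \text{ near } 0,
\end{equation*}
into $J_\la$, solve the one-variable maximization $\max_{t\geq 0}J_\la(tu_\e)$ explicitly, and seek $\e>0$ for which this maximum drops strictly below $c^\ast$. The local gradient and critical HLS terms reproduce the \cite{GY1}-type asymptotics
\begin{equation*}
\|\na u_\e\|_2^2=A+O(\e^{n-2}),\qquad \|u_\e\|_{HL}^{2\cdot 2_\mu^\ast}=B+O(\e^n),
\end{equation*}
with $A$ and $B$ the HLS extremal values at $\e=0$, while the new mixed-operator piece satisfies $[u_\e]_s^2=O(\e^{2-2s})$ and is the dominant error whenever $n-2>2-2s$. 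To produce $c<c^\ast$ it is therefore enough that the subcritical gain $\tfrac{\la}{p+1}\I\Om u_\e^{p+1}dx$ exceeds the combined remainder $O(\e^{n-2})+O(\e^{2-2s})$ for some small $\e>0$. Evaluating $\I\Om u_\e^{p+1}dx$ and comparing its exponent in $\e$ against both $n-2$ and $2-2s$ produces the three quantities inside the inner $\min$ of the hypothesis on $n$: the first two are inherited from \cite{GY1} (competition with $\e^{n-2}$ and the HLS exponent $\mu$), while the third, $2(1+(2-2s)/(p-1))$, is precisely the new threshold beyond which the subcritical gain dominates the nonlocal error $\e^{2-2s}$. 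The outer $\max$ with $2(p+1)/p$ captures the regime in which $\I\Om u_\e^{p+1}dx$ actually diverges as $\e\to 0^+$, which is needed to make the gain unbounded. Under assumption (i), any $\la>0$ suffices; under (ii) one of the asymptotic orderings is lost, but choosing $\la$ sufficiently large reinstates the strict inequality $c<c^\ast$, closing the argument.
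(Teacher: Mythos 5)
Your proposal follows essentially the same route as the paper: mountain pass geometry, boundedness of Palais--Smale sequences, the compactness threshold $\frac{n+2-\mu}{4n-2\mu}S_{H,L,C}^{\frac{2n-\mu}{n+2-\mu}}$ (your $c^\ast$ coincides with it), and the level estimate via truncated Aubin--Talenti functions in which the only new ingredient is the Gagliardo error $[u_\e]_s^2=O(\e^{2-2s})$, with the large-$\la$ argument covering case (2) --- exactly the content of Lemmata \ref{l4.2}, \ref{l4.3}, \ref{l4.4} and \ref{l4.5}. Apart from minor bookkeeping (the HLS remainder is $O(\e^{n-\mu/2})$ rather than $O(\e^{n})$, and your gloss on which exponent each entry of the $\min/\max$ controls is heuristic), the argument matches the paper's proof.
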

 	
 \textbf{Plan of the paper:} The rest of the paper is organized as follows. In Section \ref{P} we collect the preliminary material needed to setup the appropriate functional space. In Section \ref{HLS}, we focused on the mixed order Hardy-Littlewood-Sobolev inequality and prove Theorems \ref{t1.1} and \ref{t1.2}. Finally in Section \ref{SCP}, we present the analysis of critical problem. This section is divided into three subsections. In Subsection \ref{SRP}, we first focus on regularity of solutions and prove Theorem \ref{trr}. Then with the help of regularity result, we establish the maximum principle and prove Proposition \ref{MP}. We conclude this subsection by first developing the Poho\v{z}aev identity for \eqref{e1.1} (see Proposition \ref{PPI}) and then use this identity to prove Theorem \ref{t1.4}. Next in Subsection \ref{SLC}, we consider the case $p=1$ and complete the proof of Theorem \ref{tlc}. Finally, in Subsection \ref{SSC}, we consider the superlinear and subcritical case and prove Theorem \ref{t4.1}.
 
 \textbf{Notations:} Throughout the paper, we will use the following notations:
 \begin{itemize}
 	\item We denote positive constants by $M, M_1, M_2, \cdots$.
 	\item We denote the standard norm on $L^p(\mb R^n)$ by $|\cdot|_p$. 
 	\item Let $V$ be a real Hilbert space and $\Phi: V \ra \mb R$ a functional of class $C^1$. We say that $\{u_k\}_{k\in \mb N} \subset V$ is a Palais-Smale sequence at level $c$, usually denoted by $(PS)_c$, for $\Phi$ if $\{u_k\}_{k \in \mb N}$ satisfies
 	\begin{equation*}
 		\Phi(u_k) \ra c~\text{and}~\Phi'(u_k) \ra 0,~\text{as}~k \ra \infty.
 	\end{equation*}
 	Moreover, $\Phi$ satisfies the $(PS)_c$ condition at $c$, if any $(PS)$ sequence at $c$ possesses a convergent subsequence.
 \item We shall also use the following notation
 	\begin{align*}
 		\|u\|_{HL} =\left(\I{\mb R^n} \I{\mb R^n} \frac{|u(x)|^{2_\mu^\ast}|u(y)|^{2_\mu^\ast}}{|x-y|^\mu}dxdy\right)^\frac{1}{2 .2_\mu^\ast}.
 		u \in \Pi(\Om), 
 	\end{align*}	
 The definition of $\Pi(\Om)$ is given in Section \ref{P}.
 \end{itemize}
\section{Preliminaries}\label{P}
		In this section we shall give the functional settings required to study the problem \eqref{e1.1} and also collect the notations and preliminary results required in the rest of the paper.\\
		Let $s \in (0,1)$. For a measurable function $u:\mb R^n \ra \mb R$, we define
		\begin{equation*}
			[u]_s =\left(\frac{C(n,s)}{2} \I{\mb R^n}\I{\mb R^n}\frac{|u(x)-u(y)|^2}{|x-y|^{n +2s}}dxdy\right)^\frac12,
		\end{equation*}
	the so-called Gagliardo seminorm of $u$ of order $s$.	\\
	Let $ \Om \subset 
	\mb R^n$ be an arbitrary non-empty open set, not necessarily bounded. We define the space $\Pi (\Om)$ as the completion of $C_0^\infty (\Om)$ with respect to the global norm
	\begin{align*}
			\mc G(u):= \left(\| u\|^2+[u]_s^2\right)^\frac12, ~ u \in C_0^\infty(\Om),
		\end{align*}
	where we denote $\|u\|^2 =\ds\I{\mb R^n}|\na u|^2$.
	\begin{Remark}\label{remarkdef}
		A few remarks are in order:
		\begin{enumerate}
			\item The norm $\mc G(\cdot)$ is induced by the scalar product
			\begin{equation*}
				\ld u,v\rd_{\mc G}:= \I{\mb R^n} \na u\cdot\na v dx + \frac{C(n,s)}{2}\I{\mb R^n}\I{\mb R^n}\frac{(u(x)-u(y))(v(x)-v(y))}{|x-y|^{n+2s}}dxdy,
				\end{equation*}
			where $\cdot$ denotes the usual scalar product in the Euclidean space $\mb R^n$, and $\Pi(\Om)$ is a Hilbert space.
			\item Note that in the definition of $\mc G(\cdot)$ the $L^2$-norm of $\na u$ is considered on the whole of $\mb R^n$ in spite of $u \in C_0^\infty(\Om)$ (identically vanishes outside $\Om$). This is to point out that the elements in $\Pi(\Om)$ are functions defined on the entire space and not only on $\Om$. The benefit of having this global functional setting is that these functions can be globally approximated on $\mb R^n$ with respect to the norm $\mc G(\cdot)$ by smooth functions with support in $\Om$.\\
			In particular, when $\Om \ne \mb R^n$, we will see that this global definition of $\mb G(\cdot)$ implies that the functions in $\Pi(\Om)$ naturally satisfy the nonlocal Dirichlet condition prescribed in problem \eqref{e1.1}, that is,
			\begin{equation}\label{e1.2}
				u \equiv 0 ~\text{a.e. in}~\mb R^n\setminus\Om~\text{for every}~u \in \Pi(\Om).
			\end{equation} 
		In order to verify \eqref{e1.2}, we distinguish between two cases.\\
		\textbf{Case A}: When $\Om$ is bounded. In this case, we know (see \cite[Proposition 2.2]{DPV}) $H^1(\mb R^n)$ is continuously embedded into $H^s(\mb R^n)$ (with $s\in(0,1)$) $i.e.$ there exists a constant $k =k(s)>0$ such that, for every $u \in C_0^\infty(\Om)$ one has
		\begin{equation}\label{e2.2}
			[u]_s^2 \leq k(s)\|u\|_{H^1(\mb R^n)}^2=k(s)(\|u\|_{L^2(\mb R^n)}^2+\|\na u\|_{L^2(\mb R^n)}^2).
		\end{equation}
	This, together with the classical Poincar\'{e} inequality, implies that $\mc G(\cdot)$ and the full $H^1-norm$ in $\mb R^n$ are actually equivalent in the space $C^\infty_0(\Om)$, and hence
	\begin{align*}
		\Pi(\Om)=\overline{C_0^\infty(\Om)}^{\|\cdot\|_{H^1(\mb R^{n})}}=\{u \in H^1(\mb R^n): u\arrowvert_\Om\in H_0^1(\Om)~\text{and}~u\equiv 0 ~\text{a.e. in}~\mb R^n\setminus \Om\}.
	\end{align*}
\textbf{Case B}: When $\Om$ is unbounded. In this case, even if the embedding inequality \eqref{e2.2} holds, the Poincar\'e inequality does not hold in general. Hence, the norm $\mc G(\cdot)$ is no more equivalent to the full norm $H^1$-norm in $\mb R^n$, and $\Pi(\Om)$ is not a subspace of $H^1(\mb R^n)$.\\
On the other hand, by the classical Sobolev inequality we infer the existence of a constant $S >0$, independent of the set $\Om$, such that
\begin{equation}\label{e2.3}
	S \|u\|_{L^{2^\ast}(\mb R^n)}^2 \leq \|\na u\|_{L^2(\mb R^n)}^2 \leq \mc G(\cdot)^2~\text{for every}~u \in C_0^\infty(\Om),
\end{equation}
where $2^*=2n/(n-2)$ is the critical Sobolev exponent. Now \eqref{e2.3} implies that every Cauchy sequence in $C_0^\infty(\Om)$ with respect to the norm $\mc G(\cdot)$ is also a Cauchy sequence in the space $L^{2^*}(\mb R^n)$. As a consequence, since the functions in $C_0^\infty(\Om)$ identically vanish out of $\Om$, we obtain
\begin{align*}
	\Pi(\Om)=\{u \in L^{2^*}(\mb R^n): u\equiv 0~\text{a.e. in}~\mb R^n\setminus \Om,~\na u \in L^2(\mb R^n)~\text{and}~[u]_s <\infty\}.
\end{align*}
	\end{enumerate}
	\end{Remark}
\section{Mixed Hardy-Littlewood- Sobolev inequality}\label{HLS}		
	Here we are interested to find the sharp constant in the Hardy-Littlewood-Sobolev inequality defined by
	\begin{align}\label{mhc}
		S_{H,L,M}(\Om)=\inf\limits_{u \in C_0^\infty(\Om)\setminus \{0\}} \frac{\mc G(u)^2}{\left(\I{\mb R^n}\I{\mb R^n}\frac{|u(x)|^{2_\mu^\ast}|u(y)|^{2_\mu^\ast}}{|x-y|^\mu}dxdy\right)^\frac{1}{2_\mu^\ast}}.
	\end{align}	
Recall that for any $\Om \subset \mb R^n$, the best constant in the classical Hardy-Littlewood-Sobolev inequality is given as
	\begin{align}\label{chc}
		S_{H,L,C}(\Om)=\inf\limits_{u \in C_0^\infty(\Om)\setminus \{0\}} \frac{\|\nabla u\|_{L^2(\Om)}^2}{\left(\I\Om\I\Om\frac{|u(x)|^{2_\mu^\ast}|u(y)|^{2_\mu^\ast}}{|x-y|^\mu}dxdy\right)^\frac{1}{2_\mu^\ast}}.
	\end{align}
	Let us first recall properties of $S_{H,L,C}(\Om)$. For a proof of these properties we refer to \cite{GY1} 
	\begin{Remark}\label{r3.1}
	\begin{enumerate}
		\item For $N \geq 3$ and for every open set $\Om$ of $\mb R^n$, we have $S_{H,L,C}(\Om)=S_{H,L,C}$.
		\item $S_{H,L,C}(\Om)$ is never achieved except when $\Om =\mb R^n$.
		\item If $\Om =\mb R^n$, then $S_{H,L,C}$ is achieved by the family of functions
		\begin{equation*}
			\mc A =\{V_{t,x_0}=t^{\frac{2-n}{2} }\mc U ((x- x_0)/t):t>0,~x_0 \in \mb R^n\},
		\end{equation*}
	where $\mc U(y);= c(1+|y|^2)^\frac{2-n}{2}$. 
	\end{enumerate}	
\end{Remark}
We are now ready to prove Theorem \ref{t1.1}
	
\textbf{ Proof of Theorem \ref{t1.1}}:
	From the definition of $\mathcal{G}(u)$, the inequality $S_{H,L,M}(\Om) \geq S_{H,L,C}$ is clear. Next we prove the reverse inequality. First note that $S_{H,L,M}(\Om)$ is translation invariant and so we can assume without loss of generality that $ 0\in \Om$. Now let $r >0$ be such that $B_r(0) \subseteq \Om$. Then for any $u \in C_0^\infty(\Om)$, there exists $k_0(u) \in \mb N$ such that
	\begin{equation*}
		\text{supp} (u) \subseteq B_{kr}(0),~\text{for any}~k \geq k_0.
	\end{equation*}
Now setting $u_k:= k^\frac{n-2}{2}u(kx)$, for $k\geq k_0$ we see that
\begin{align*}
	\text{supp}(u_k) \subseteq B_r(0) \subseteq \Om.
\end{align*}
 Now by definition of $S_{H,L,M}(\Om)$ we have for every $k\geq k_0$ that
 \begin{align*}
 	S_{H,L,M}(\Om) \leq \frac{\mc G(u_k)^2}{\|u_k\|_{HL}^2} =\frac{\|\nabla u\|_{L^2(\mb R^n)}^2}{\|u\|_{HL}^2}+k^{2s-2}\frac{[u]_s^2}{\|u\|_{HL}^2}.
 \end{align*}
Let $k \ra \infty$ and using $0<s<1$, we obtain
\begin{equation*}
	S_{H,L,M}(\Om) \leq \frac{\|\nabla u\|_{L^2(\mb R^n)}^2}{\|u\|_{HL}^2}.
\end{equation*}
By the arbitrariness of $u \in C_0^\infty(\Om)$ and the fact that $S_{H,L,C}$ is independent of the set $\Om$, we finally infer that
\begin{equation*}
	S_{H,L,M}(\Om) \leq S_{H,L,C}.
\end{equation*}
Hence we have proved the required equality. \QED
We are now in the position of proving Theorem \ref{t1.2}.\\
\textbf{Proof of Theorem \ref{t1.2}}:
		Suppose on the contrary that there exists a nonzero function $v \in \Pi(\Om)$ such that $\|v\|_{HL}=1$ and
		\begin{equation*}
			\mc G(v)=S_{H,L,M}(\Om) =S_{H,L,C}.
		\end{equation*}
	Noting that $\na u \in L^2(\Om)$  , we infer that
	\begin{align*}
		S_{H,L,C}\leq \|v\|^2 \leq \| v\|^2 +[v]^2=S_{H,L,C},
	\end{align*}
which implies that $[v]_s=0$. As a consequence, the function $u_0$ must be constant in $\Om$, which contradicts the fact that $\|v\|_{HL}=1$. \QED
	\begin{Remark}
		Even if Theorem \ref{t1.2} show that the constant $S_{H,L,M}(\Om)=S_{H,L,C}$ is never achieved in the space $\Pi(\Om)$ (independently of the set $\Om$). In the particular case $\Om = \mb R^n$ we can prove that $S_{H,L,C}$ is achieved 'in the limit': more precisely, if $\mc A =\{V_{t,x_0}\}$ is as in Remark \eqref{r3.1}-$3$, we have
		\begin{equation*}
			\mc G(V_{t,x_0})^2 \ra S_{H,L,C}~\text{as}~t \ra \infty.
		\end{equation*}
	\end{Remark}
Indeed,
\begin{align*}
	|\mc U(y)| \leq M \min \{1,|y|^{2-n}\}~\text{and}~|\na \mc U(y)| \leq M \min \{|y|,|y|^{1-n}\},
\end{align*}
for some $M>0$. Therefore (up to renaming $M$ line after line),
\begin{align*}
	[\mc U(y)]_s^2 &= \I{ \mb R^n}\I{ \mb R^n}\frac{|\mc U(x+z)-\mc U(x)|^2}{|z|^{n+2s}}dxdz\\
	&\leq \I{\mb R^n}\I{B_1}\left| \I{0}^{1}\na \mc U (x+tz)\cdot z dt\right|^2 \frac{dxdz}{|z|^{n+2s}} +2\I{\mb R^n}\I{\mb R^n \setminus B_1}\left( |\mc U (x+z)|^2 +|\mc U (x)|^2\right)\frac{dxdz}{|z|^{n+2s}}\\
	&\leq M\I{\mb R^n}\I{B_1}\I{0}^{1}\min\{|x+tz|^2,|x+tz|^{2(1-n)}\}\frac{dxdzdt}{|z|^{n+2s-2}} +4\I{\mb R^n}\I{\mb R^n \setminus B_1} |\mc U(y)|^2 \frac{dydz}{|z|^{n+2s}}\\
	&\leq M \I{\mb R^n}\I{B_1}\I{0}^{1}\min\{|y|^2,|y|^{2(1-n)}\}\frac{dydzdt}{|z|^{n+2s-2}} +M \I{\mb R^n}\I{\mb R^n \setminus B_1} \min\{1,|y|^{2(2-n)}\}\frac{dzdy}{|z|^{n+2s}},
	\end{align*}
which is finite.\\
Hence, $\mc U \in \Pi(\mb R^n)$ and consequently $V_{t,x_0} \in \Pi(\mb R^n)$ for every $t>0$ and $x_0 \in R^n$. Moreover, recalling that
\begin{align*}
	V_{t,x_0}(x) =t^\frac{2-n}{2}\mc U\left(\frac{x-x_0}{t}\right)~\text{and}~\| V_{t,x_0}\|_{HL} =\|\mc U\|_{HL}=1,
\end{align*}
and by arguing as in the proof of Theorem \ref{t1.1} we have
\begin{equation*}
	\mc G (V_{t,x_0})^2 =\mc G (V_{t,0})^2 =\|\mc U\|^2 +t^{2-2s}[\mc U]_s^2. 
\end{equation*}
From this, since $\mc U =V_{1,0}$ is an optimal function in the classical Hardy-Littlewood- Sobolev inequality, by letting $t \ra 0$ we obtain
\begin{equation*}
	\mc G (V_{t,x_0})^2 \ra \|\mc U\|^2 =S_{H,L,C}.
	\end{equation*}
	\section{Study of Critical problems}\label{SCP}	
In this section, we will study the problem \eqref{e1.1} and focus on the existence and nonexistence of solutions. Throughout this section, we assume that $\Om\subseteq \mb R^n$ is a bounded open set. We inherit all the definitions and notations of Sections \ref{P} and \ref{HLS}.
\begin{Remark}
	Some remarks are in order.
	\begin{enumerate}
		\item Firstly, since $\Om \subseteq \mb R^n$ is bounded, we have
		\begin{align*}
			\Pi(\Om)=\{u \in H^1(\mb R^n): u\arrowvert_\Om \in H_0^1(\Om)~\text{and}~u \equiv 0~\text{a.e. in}~\mb R^n \setminus \Om\}.
		\end{align*}
		As a consequence, the assumption $u \in \Pi(\Om)$ contains the Dirichlet condition $u\equiv 0$ a.e. in $\mb R^n \setminus \Om$.
		\item We also observe that Definition \ref{d4.1} is well-posed, in the sense that all the integrals in \eqref{4.1} are finite. Indeed, if $u, \varphi \in \Pi(\Om)$, we have
		\begin{align*}
		\left|\I\Om  \na u \na \varphi dx +\frac{C(n,s)}{2}\I{\mb R^n}\I{\mb R^n}\right. &\left.\frac{(u(x)-u(y))(\varphi(x)-\varphi(y))}{|x-y|^{N+2s}}dxdy\right|\\
			\leq& \|u\|\|\varphi\|+[u]_s[\varphi]_s \leq 2\mc G(u)\mc G(\varphi )<\infty.
		\end{align*}
		Moreover, since $\Pi(\Om) \hookrightarrow L^{2^*}(\mb R^n)$ and $p <2^*-1$, using classical Hardy-Littlewood-Sobolev and H\"{o}lder's inequality (and taking into account that $u,v=0$ a.e. in $\mb R^n \setminus \Om$) we also have
		\begin{align*}
			\I{\Om}\I{\Om}\frac{|u(x)|^{2_\mu^*}|u(y)|^{2_\mu^*-2}u(y)\varphi(y)}{|x-y|^\mu}dxdy+ \la \I{\Om}u^p\varphi& dx\\
			\leq& C(N,\mu)|u|_{2^*}^2|\varphi|_{2^*}+|\la||\varphi|_{\frac{2^*}{2^*-p}}<\infty. 
		\end{align*}
	\end{enumerate}
\end{Remark}
Before proceeding, we want to introduce the energy functional associated to equation \eqref{e1.1} defined for any $u \in \Pi(\Om)$ by
\begin{equation*}
	J_\la(u)=\frac{1}{2}\mc G(u)^2-\frac{1}{2\cdot2_\mu^*}\I{\Om}\I{\Om}\frac{|u(x)|^{2_\mu^*}|u(y)|^{2_\mu^*}}{|x-y|^\mu}dxdy- \frac{\la}{p+1} \I{\Om}u^{p+1}dx.
\end{equation*}
Then using Mixed Hardy-Littewood-Sobolev inequality \cite{BDVV} and assumptions on $\mu$, $n$ and $p$, we observe that $J_\la \in C^1(\Pi(\Om),\mb R)$ and for any $u, \varphi \in \Pi(\Om)$ one has
\begin{align*}
	\ld J_\la^\prime(u), \varphi \rd =& \I{\mb R^n} \na u\cdot\na \varphi dx + \frac{C(n,s)}{2} \I{\mb R^n}\I{\mb R^n}\frac{(u(x)-u(y))(\varphi(x)-\varphi(y))}{|x-y|^{n+2s}}dxdy\\
	&-\I{\Om}\I{\Om}\frac{|u(x)|^{2_\mu^*}|u(y)|^{2_\mu^*-2}u(y)\varphi(y)}{|x-y|^\mu}dxdy- \la \I{\Om}u^p\varphi dx.
\end{align*}
This shows that $u$ is a weak solution of \eqref{e1.1} if and only if $u$ is a critical point of the functional $J_\la$.\\
With Definition \ref{d4.1} in hand,	we first establish  regularity and nonexistence results of the nonnegative solution of \eqref{e1.1}.	
\subsection{Regularity of solutions and Poho\v{z}aev identity}\label{SRP}
We begin this subsection by establishing the 

\textbf{Proof of Theorem \ref{trr}:}
We start by proving the $L^\infty$ estimate.
	Here we closely follow the proof of \cite[Theorem 6.2]{MS}.  We choose a constant $m>1$ and $\rho>0$ small (an appropriate choice of $\rho$ is given later in the proof) so that for any $x\in \mb R^n$, $w(x):=u(x)/m \in \Pi(\Om)$ satisfies the following:
	\begin{align}\label{e4.11}\nonumber
		\I{\mb R^n} \na w \na \varphi +&\frac{C(n,s)}{2}\I{\mb R^n}\I{\mb R^n}\frac{(w(x)-w(y))(\varphi(x)-\varphi(y))}{|x-y|^{n+2s}}\\
		\leq& \I{\Om}\I{\Om}\frac{|w(y)|^{2_\mu^\ast}|w(x)|^{2_\mu^\ast-2}w(x)\varphi(x)}{|x-y|^\mu}dydx \\
		+ \la \I\Om w^p \varphi dx,			\end{align}
	for every $ 0\leq \varphi \in \Pi(\Om)$ and $|w|_{2_\mu}^\ast =\rho$. Next, for every $ k \in \mb N$, we set \begin{equation*}
		D_k =1-2^{-k}, w_k := w-D_k, ~v_k:=w_k^+:=\max\{v_k,0\} ~\text{and}~\bar{ V}_k := |v_k|_{2_\mu^\ast}.
	\end{equation*}
	Note that using Dominated Convergence theorem, we have
\begin{equation}\label{edc}
	\lim_{k \ra \infty} \bar{V}_k=\left(\I\Om[(w-1)^+]^{2^*}dx\right)^\frac{1}{2^*}.
\end{equation}
We claim that
\begin{equation}\label{0l}
		\lim_{k \ra \infty} \bar{V}_k=0.
\end{equation}
Then combining \eqref{edc} and \eqref{0l}, we will obtain that $u \in L^\infty(\Om)$. Now to prove \eqref{0l}, will show that $\bar{V}_k$ satisfy the following inequality for some $M>0$
\begin{equation}\label{in}
	\bar{V}_k \leq M\theta^{k+1},~\text{with}~ \theta \in (0,1).
\end{equation}
	 To obtain \eqref{in}, we first observe that since $ u\in \Pi(\Om)$ and $\Om$ is bounded, $w_k \in H^1_{loc}(\mb R^n)$. Furthermore, since $u \equiv 0$ a.e. in $\mb R^n \setminus \Om$, we also have
	\begin{equation*}
		w_k =w-D_k =-D_k <0~\text{on}~\mb R^n \setminus \Om
	\end{equation*}
	and thus $v_k=w_k^+ \in \Pi(\Om)$.
	It is a simple observation that $v_k$ satisfies 
	\begin{equation}\label{e4.12}
		|v_k(x)-v_k(y)|^2 \leq (w_k(x)-w_k(y))(v_k(x)-v_k(y))
	\end{equation}	
	and 
	\begin{equation}\label{e4.13}
		\I{\mb R^N} \na w \na v_k=\I{\mb R^N} \na w_k \na v_k = \I{\mb R^n \cap \{u >D_k\} } |\na v_k|^2 =\I{\mb R^n}|\na v_k|^2 dx.
	\end{equation}
	Now for any $k \in \mb N$, $D_{k+1} >D_k$ and so $w_{k+1} <w_k$ a.e. in $\mb R^n$. Also let \begin{equation*}
		C_k := D_{k+1}/(D_{k+1}-D_k) =2^{k+1}-1~\text{for any}~ k \in \mb N.
	\end{equation*}
	We claim that for any $k \in \mb N$
	\begin{equation}\label{e4.14}
		w<C_k v_k ~\text{on}~\{v_{k+1}>0\}.
	\end{equation}
	To see this, let $x \in \{v_{k+1}>0\}$. Then $w(x) >D_{k+1}>D_k$, so $v_k(x)=w(x)-D_k$  and 
	\begin{equation*}
		C_k v_k(x)=w(x)+\frac{D_k}{D_{k+1}-D_k}(w(x)-D_{k+1})>w(x).
	\end{equation*} 
	Notice also that $w_{k+1}(x)-w_{k+1}(y)=w(x)-w(y)$, for any $x, y \in \mb R^n$.
	From this, \eqref{e4.12}, \eqref{e4.13}, \eqref{e4.14} and testing \eqref{e4.11} with $\varphi =v_{k+1}$, we get 
	\begin{align}\nonumber\label{e4.15}
		\mc G(v_{k+1})^2 \leq & \I\Om\I\Om \frac{|w(y)|^{2_\mu^\ast}|w(x)|^{2_\mu^\ast-2}w(x)v_{k+1}(x)}{|x-y|^\mu}dydx +\la \I\Om w^p v_{k+1}dx\\ \nonumber
		=& \I{\{v_{k+1}(x)>0\}}\I\Om \frac{|w(y)|^{2_\mu^\ast}|w(x)|^{2_\mu^\ast-2}w(x)v_{k+1}(x)}{|x-y|^\mu}dydx +\la \I{\{v_{k+1}(x)>0\}}  w^p v_{k+1}dx\\ \nonumber
		\leq& C_k^{2_\mu^\ast-1} \I{\{v_{k+1}(x)>0\}}\I\Om \frac{|w(y)|^{2_\mu^\ast}|v_k(x)|^{2_\mu^\ast-1}v_{k+1}(x)}{|x-y|^\mu}dydx +\la C_k^p \I{\{v_{k+1}(x)>0\}}v_{k}^{p+1}dx\\  \nonumber
		\leq&C_k^{2_\mu^\ast-1} \I{\{v_{k+1}(x)>0\}}\I\Om \frac{|w(y)|^{2_\mu^\ast}|v_k(x)|^{2_\mu^\ast}}{|x-y|^\mu}dydx\\
		&+\la 2^{(k+1)p}|v_k|_{2^\ast}^{p+1}|\{v_{k+1}(x)>0\}|^{\kappa_{p,n}},
	\end{align}
	where $\kappa_{p,n} =\left((1-p)n +2(p+1)\right)/(n-2)$.
	Let us consider the first integral on the right hand side above inequality and we see that
	\begin{align}\nonumber\label{e4.16}
		\I{\{v_{k+1}(x)>0\}}\I\Om \frac{|w(y)|^{2_\mu^\ast}|v_k(x)|^{2_\mu^\ast}}{|x-y|^\mu}dydx
		\leq& \left( \I{\{v_{k+1}(x)>0\}}\I{\{w(y)\geq D_{k+1}\}}+\I{\{v_{k+1}(x)>0\}}\I{\{w(y)< D_{k+1}\}}\right)\\
		& \frac{|w(y)|^{2_\mu^\ast}|v_k(x)|^{2_\mu^\ast}}{|x-y|^\mu}dydx = T_1 +T_2~\text{(say)}.
	\end{align}
	Now using \eqref{e4.14} and classical Hardy-Littlewood-Sobolev inequality, we have
	\begin{align}\nonumber \label{e4.17}
		T_1 =&\I{\{v_{k+1}(x)>0\}}\I{\{w(y)\geq D_{k+1}\}}\frac{|w(y)|^{2_\mu^\ast}|v_k(x)|^{2_\mu^\ast}}{|x-y|^\mu}dydx\\
		\leq& C_k^{2_\mu^\ast}  \I{\{v_{k+1}(x)>0\}}\I{\{w(y)\geq D_{k+1}\}}\frac{|v_k(y)|^{2_\mu^\ast}|v_k(x)|^{2_\mu^\ast}}{|x-y|^\mu}dydx \leq C_k^{2_\mu^\ast} C(n,\mu)|v_k|_{2^*}^{2\cdot2_\mu^\ast}.
	\end{align}
	Next, again using H\"{o}lder's inequality we obtain
	\begin{align}\nonumber\label{e4.18}
		T_2 =& \I{\{v_{k+1}(x)>0\}}\I{\{w(y)< D_{k+1}\}}\frac{|w(y)|^{2_\mu^\ast}|v_k(x)|^{2_\mu^\ast}}{|x-y|^\mu}dydx\\\nonumber
		\leq & D_{k+1}^{2_\mu^\ast} \I{\{v_{k+1}(x)>0\}}|v_k(x)|^{2_\mu^\ast}\I\Om \frac{dy}{|x-y|^\mu}dx\\
		\leq& M D_{k+1}^{2_\mu^\ast} \I{\{v_{k+1}(x)>0\}}|v_k(x)|^{2_\mu^\ast}dx \leq M D_{k+1}^{2_\mu^\ast}|\{v_{k+1}>0\}|^{\frac{\mu}{2n}}|v_k|_{2^\ast}^{2_\mu^\ast}.
	\end{align}
	Using \eqref{e4.16}, \eqref{e4.17}, \eqref{e4.18} and Mixed Sobolev inequality in \eqref{e4.15}, we get
	\begin{align}\nonumber\label{e4.19}
		S|v_{k+1}|_{2^\ast}^2 \leq& \mc G(v_{k+1})^2\\\nonumber
		\leq & C_k^{2_\mu^\ast-1}\left(C_k^{2_\mu^\ast}C(n,\mu)|v_k|_{2^\ast}^{2\cdot2_{\mu}^\ast}+M D_{k+1}^{2_\mu^\ast}|\{v_{k+1}>0\}|^{\frac{\mu}{2n}}|v_k|_{2^\ast}^{2_\mu^\ast}\right.\\
		&\left.+\la 2^{(k+1)p}|v_k|_{2^\ast}^{p+1}|\{v_{k+1}(x)>0\}|^{\kappa_{p,n}}\right).
	\end{align}
	Now it is easy to see that 
	\begin{equation*}
		\{v_{k+1}(x)>0\} \subset \{v_k > 2^{-(k+1)}\}.
	\end{equation*}
	Thus
	\begin{align}\label{e4.20}
		\bar {V}_k^{2^*} =|v_k|_{2^*}^{2^*} \geq \I{\{v_k >2^{-(k+1)}\}}v_k^{2^*} \geq 2^{-2^*(k+1)}|\{v_{k+1}>0\}|.
	\end{align}
	Now if $\la >0$, using \eqref{e4.20}, we conclude from \eqref{e4.19} that
	\begin{align}\nonumber\label{e4.21}
		S|v_{k+1}|_{2^\ast}^2 \leq& C_k^{2_\mu^\ast-1}\left(C_k^{2_\mu^\ast}C(n,\mu)|v_k|_{2^\ast}^{2\cdot2_{\mu}^\ast}+M D_{k+1}^{2_\mu^\ast}2^{\frac{\mu2^*(k+1)}{2n}}|v_k|_{2^\ast}^{2^\ast}\right.\\  \nonumber
		&\left.+\la 2^{(k+1)(p+2^*\kappa_{p,n})}|v_k|_{2^\ast}^{2^\ast}\right)\\ \nonumber
		\leq& 2^{(2_\mu^\ast-1)(k+1)}\left(2^{2_\mu^\ast(k+1)}C(n,\mu)|v_k|_{2^\ast}^{2\cdot2_{\mu}^\ast}+M 2^{\frac{\mu2^*(k+1)}{2n}}|v_k|_{2^\ast}^{2^\ast}\right.\\ \nonumber
		&\left.+\la 2^{(k+1)(p+2^*\kappa_{p,n})}|v_k|_{2^\ast}^{2^\ast}\right)\\ \nonumber
		\leq&2^{(2_\mu^\ast-1)(k+1)} \max\left\{2^{2_\mu^\ast(k+1)}C(n,\mu), M 2^{\frac{\mu2^*(k+1)}{2n}} \right.\\
		&\left. + \la 2^{(k+1)(p+2^*\kappa_{p,n})}\right\}\times \left(|v_k|_{2^\ast}^{2\cdot2_{\mu}^\ast}+|v_k|_{2^\ast}^{2^\ast}\right).
	\end{align}
	Otherwise if $\la\leq 0$, then using \eqref{e4.19} and \eqref{e4.20} we have
	\begin{align}\nonumber\label{e4.22}
		S|v_{n+1}|_{2^\ast}^2 \leq&C_n^{2_\mu^\ast-1}\left(C_n^{2_\mu^\ast}C(N,\mu)|v_n|_{2^\ast}^{2\cdot2_{\mu}^\ast}+M D_{n+1}^{2_\mu^\ast}2^{\frac{\mu2^*(n+1)}{2N}}|v_n|_{2^\ast}^{2^\ast}\right)\\\nonumber
		\leq& 2^{(2_\mu^\ast-1)(n+1)}\left(2^{2_\mu^\ast(n+1)}C(N,\mu)|v_n|_{2^\ast}^{2\cdot2_{\mu}^\ast}+M 2^{\frac{\mu2^*(n+1)}{2N}}|v_n|_{2^\ast}^{2   ^\ast}\right)\\
		\leq&2^{(2_\mu^\ast-1)(n+1)} \max\left\{2^{2_\mu^\ast(n+1)}C(N,\mu), M 2^{\frac{\mu2^*(n+1)}{2N}}\right\}\times \left(|v_n|_{2^\ast}^{2\cdot2_{\mu}^\ast}+|v_n|_{2^\ast}^{2^\ast}\right).
	\end{align}
	Therefore using definition of $\bar{V}_k$ in \eqref{e4.21} and \eqref{e4.22}, we get
	\begin{equation}\label{e4.23}
		\bar{V}_{k+1} \leq  R^{k+1}\left(\bar{V}_k^{2_\mu^\ast}+\bar{V}_k^\frac{2^*}{2}\right),
	\end{equation}
	where
	\begin{equation*}
		R=\begin{cases}
			\left(1+\ds\frac{1}{S}\left(2^{(2_\mu^\ast-1)}\max\left\{2^{2_\mu^\ast}C(n,\mu), M 2^{\frac{\mu}{2n}},2^{(p+\kappa_{p,n})} \right\}\right)^{1/2}\right)~&\text{if}~\la>0,\\
			\left(1+\ds\frac{1}{S}\left(2^{(2_\mu^\ast-1)}\max\left\{2^{2_\mu^\ast}C(n,\mu), M 2^{\frac{\mu}{2n}}\right\}\right)^{1/2}\right)~&\text{if}~\la\leq0.
		\end{cases}
	\end{equation*}
	Note that $R>1$ and also $2_\mu^\ast>1$.\\
	Now we assume $\de >0$ is so small that
	\begin{equation}\label{e4.24}
		\de^{\frac{2^*}{2}-1} < \frac{1}{\left(2^{2_\mu^\ast}R\right)^\frac{1}{(2^*/2)-1}}.
	\end{equation}
	We also fix $\ds\theta \in \left(	\de^{\frac{2^*}{2}-1},\frac{1}{\left(2^{2_\mu^\ast}R\right)^\frac{1}{(2^*/2)-1}} \right)$. Since $R>1$ and $2^{2^*}/2 >1$, we get $\theta \in (0,1)$. Moreover,
	\begin{equation}\label{e4.25}
		\de^{\frac{2^*}{2}-1} \leq \theta~\text{and}~2^{2_\mu^\ast}R\theta^{\frac{2^*}{2}-1}\leq 1.
	\end{equation}
	Now we give our choice of $\rho$, namely we choose $\rho=\de \theta$. We show that $\bar{V}_k$ satisfies \eqref{in} with $M=2\de$.	
	We shall apply mathematical induction to prove our claim. First note that
	\begin{equation*}
		\bar{V}_0 = |w^+|_{2^*}\leq |w|_{2^*} =\rho \leq b\de \leq 2\rho =2\de\theta,
	\end{equation*}
	which is \eqref{in} with $k=1$. Let us now suppose that \eqref{in} is true for $k$ and let us prove it for $k+1$. Using \eqref{e4.23} and \eqref{e4.25}, we have
	\begin{align*}
		\bar{V}_{k+1} \leq & R^{k+1}\left(\bar{V}_k^{2_\mu^\ast}+\bar{V}_k^\frac{2^*}{2}\right)\leq 2^{2_\mu^\ast+1}R^{k+1}(\de\theta^{k+1})^{\frac{2^*}{2}}\\
		\leq& 2\de\left(2^{2_\mu^*}R\theta^{\frac{2^*}{2}-1}\right)^{k+1}\de^{\frac{2^*}{2}-1}\theta^{k+1} \leq 2\de\theta^{k+2}.
	\end{align*}
	This completes the induction and so our claim \eqref{0l} holds and thus $u \in L^\infty(\Om)$. Further since $u\equiv 0$ on $\mb R^n \setminus \Om$, $u \in L^\infty(\mb R^n)$.
	Next we show that $u \in C^{1,\al}(\bar{\Om})$ for some $\al \in (0,1)$. For this, noting that since $0<\mu<n$ and $\Om$ is bounded, we have
	\begin{align*}
		\left|\I\Om\frac{|u(y)|^{2_\mu^\ast}}{|x-y|^\mu}dy\right|\leq& |u|_\infty^{2_\mu^\ast}\left[\I{\Om \cap \{|x-y|<1\}}\frac{dy}{|x-y|^\mu}+\I{\Om \cap \{|x-y|\geq1\}}\frac{dy}{|x-y|^\mu}\right]\\
		\leq& |u|_\infty^{2_\mu^\ast}\left[\I{\Om \cap \{r<1\}}r^{n-1-\mu}+|\Om|\right] <\infty.
	\end{align*} 
Hence the right hand side of \eqref{e1.1} is in $L^\infty(\Om)$. Now if $s \in (0,1/2]$ we use \cite[Theorem 1.4]{SVWZ} and conclude that $u \in W^{2,p}(\Om)$ for every $p \in (1,\infty)$ and so $u \in C^{1,\nu}(\bar{\Om})$ for any $\nu\in (0,1)$. On the other hand if $s \in (1/2,1)$ using \cite[Theorem 2.7]{BDVV4} we conclude that $u \in W^{2,p}(\Om)$ for every $p \in \left(1,\frac{n}{2s-1}\right)$ and so $u \in C^{1,\nu}(\bar{\Om})$ for  $\nu \in (0,2-2s)$. Next according to Riesz potential regularity we show that $v= \ds \I{\Om}\frac{|u(y)|^{2_\mu^*}}{|x-y|^\mu}dy \in C^{0,\beta}(\bar{\Om})$ with $\beta<\min\{s, n-\mu\}$. Let $\gamma \in (0,n)$ such that $\mu =n-\gamma$. Note that since $u \in L^\infty(\mb R^n)$ and $u \equiv 0$ in $\mb R^n \setminus \Om$, we have $|u|^{2_\mu^*}, v \in L^\infty(\Om)$. Using \cite[Proposition 1.4 (iii)]{OS1} (see also \cite{St}), we get $v \in C^{0,\ba}(\bar{\Omega})$ with $\ba <\min\{s,\gamma\}$. Thus we see that right hand side of \eqref{e1.1} is in $C^{0,\ba}(\bar{\Om})$. Now using $C^{2,\al}$ regularity of $\partial \Om$ and applying \cite[Theorem 2.8]{BDVV4}, we conclude that $u \in C^{2,\tilde\ba}(\bar{\Om})$ for $\tilde\ba\leq\min\{\beta, \alpha\}$. \QED
Now we will make use of Theorem \ref{trr} and complete the

\textbf{Proof of Proposition \ref{MP}}:
Suppose that there exists $x_0 \in \Om$ such that $u(x_0)=0$. Then since $u \geq 0$ in $\Om$, $u =\inf_{x\in \bar{\Om}} u(x)$. Then since $u \in C^{2,\al}(\bar{\Om})$, $-\De u(x_0)\leq 0$. Again since $u$ is nontrivial and continuous, $u>0$ on $B_r(y_0)$ for some $y_0 \in \mb R^n$ and $r>0$. From here, it is easy to conclude that $(-\De)^s u(x_0) <0$ and hence left hand side of \eqref{e1.1} is strictly negative at $x_0$. This is a contradiction since the right hand side of \eqref{e1.1} evaluated at $x_0$ is zero. \QED
Next we prove the Poho\v{z}aev identity \eqref{pi} and give the

\textbf{Proof of Proposition \ref{PPI}}:
	Since $u$ is a solution it satisfies \eqref{e1.1} and from Theorem \ref{trr}, $u \in C^{2,\al}(\bar{\Om})$ for some $\al \in (0,1)$. Multiplying \eqref{e1.1} by $(x\cdot \na u)$ and integrating, we get
	\begin{align}\label{0.2}\nonumber
		-\I\Om (x\cdot \na u)\De u +\I\Om(x\cdot \na u)(-\De)^s u =& \I\Om(x\cdot \na u)\left(\ds\I{\Om}\frac{|u(y)|^{2_\mu^\ast}}{|x-y|^\mu}\right)|u|^{2_\mu^\ast-1}dx\\
		& +\la\I\Om (x\cdot \na u) u^{p}dx. 
	\end{align}
Since $u \in C^{2,\al}(\bar{\Om})\cap C^{1,\al}(\bar{\Om})$ for $\al \in (0,1)$
we have
	\begin{align}\label{0.6}
		-\I\Om \De u (\na u \cdot x)dx 
		=\frac{2-n}{2}\I\Om|\na u|^2 -\frac12 \I{\partial \Om}\left(\frac{\partial u}{\partial \nu}\right)^2 \nu(x) \cdot x d\sigma.
	\end{align}
Now for $x \in \bar{\Om}$, define $\de(x)=\text{dist}(x,\partial \Om)$. Then we see that $u/\de \in C^{0,\al}(\bar{\Om})$ for some $\al \in (0,1)$. Indeed, the regularity of $u/\de$ is the same as that of $\ds \frac{\partial u}{\partial \nu}$. Now since $u \in C^{1,\al}(\bar{\Om})$ for some $\al \in (0,1)$ and $\Om$ is $C^{1,1}$,  $\ds \frac{\partial u}{\partial \nu} \in C^{0,\al}(\bar{\Om})$ for some $\al \in (0,1)$ and so is $u/\de$. Thus we are entitled to use
	 Theorems $1.4$ and $1.6$ of \cite{OS} and get
	\begin{align}\label{0.7}
		\I\Om(x\cdot \na u)(-\De)^s u = \frac{2s-n}{2}\I\Om u(-\De)^sudx-\frac{\Gamma(1+s)^2}{2}\I{\partial\Om}\left(\frac{u}{\de^s}\right)^2(\nu(x).x)d\sigma.
	\end{align}
	Using \cite[Proposition 6.2]{GY1}, we also have
	\begin{align}\label{0.8}
		\I\Om (x\cdot \na u(x))\I\Om \frac{|u(y)|^{2_\mu^\ast}}{|x-y|^\mu}dy|u(x)|^{2_\mu^*-1}dx=\frac{\mu-2n}{2.2_\mu^\ast}\I\Om\I\Om\frac{|u(y)|^{2_\mu^\ast}|u(x)|^{2_\mu^\ast}}{|x-y|^\mu}dxdy.
	\end{align}
	Lastly,
	\begin{align}\label{0.9}
		\I\Om (x\cdot \na u)u^pdx =\I\Om x\cdot \na \left(\frac{1}{p+1}u^{p+1}\right)dx =-\frac{n}{p+1}\I\Om u^{p+1}dx.
	\end{align}
	Substituting \eqref{0.6}, \eqref{0.7}, \eqref{0.8} and \eqref{0.9} in \eqref{0.2} \, we obtain \eqref{pi}. \QED
Now we can prove Theorem \ref{t1.4}.\\
\textbf{Proof of Theorem \ref{t1.4}}:
	Let $u$ be a nonnegative weak solution of \eqref{e1.1}. Then $u$ satisfies
\begin{align*}
	\|u\|^2 +[u]_s^2 = \I\Om\I\Om\frac{|u(x)|^{2_\mu^\ast}|u(y)|^{2_\mu^\ast}}{|x-y|^\mu}dxdy+ \la \I\Om |u|^{p+1}dx.
\end{align*}
Using this in \eqref{pi}, we have
\begin{align*}
	(s-1)[u]_s^2-\frac{1}{2}\I{\partial\Om}\left(\frac{\partial u}{\partial \nu}\right)^2\nu(x).x d\sigma-\frac{\Gamma(1+s)^2}{2}&\I{\partial\Om}\left(\frac{u}{\de^s}\right)^2(\nu(x).x)d\sigma\\
	=&-\la\left(	n\left(\frac{1}{p+1}-\frac12\right) +1\right)\I\Om|u|^{p+1}.
\end{align*}
Now using the facts that $\Om$ is strictly star shaped and \eqref{epn}, we have $ u\equiv 0$. This is a contradiction to the fact that $u$ is nontrivial.\QED
Finally we complete this subsection by giving the

\textbf{Proof of Theorem \ref{t4.3}}: Let $s \in (0,3/4)$. We will show that 
 $(-\De)^s u \in C^{0,\ba}(\Om)$ for some $\ba \in (0,1)$. Firstly, if $s \in (0,\frac12]$, using that  $u \in C^{1,\al}(\bar{\Om})$ and $u \equiv 0$ in $\mb R^n \setminus \Om$, we have that $u \in C^{0,1}(\mb R^n)$. Let $K \subset \Om$ be a compact set  and let $\eta$ be a cut-off function, that is, $\eta \in C_0^\infty(\Om)$ such that $\eta(x) \in [0,1]$ for every $x \in \Om$, supp$\eta \subset \Om$, and $\eta(x)=1$ for every $x \in K$ and suppose $v =u \eta$. Since $u\equiv0$ in $\mb R^n \setminus \Om$, we conclude that 
$ v \in  C^{1,\nu}(\mb R^n)$, with any $\nu \in (0,1)$ if $s \leq \frac12$ and any $\nu \in (0,2-2s)$ if $s >\frac12$. 
 Then using \cite[Proposition 2.6-$(ii)$]{S}, we obtain $(-\De)^sv \in C^{0,\ba}(\mb R^n)$ with $\ba \in (0,1)$ if $s \leq \frac12$ and $\ba \in (0,3-4s)$ if $s >\frac12$. Also for any $x \in K$ we have 
\begin{equation*}
	(-\De)^s u(x)= (-\De)^s v(x) +\I{\mb R^n} \frac{u(y)(1-\eta(y ))}{|x-y|^{n+2s}}dy
\end{equation*}
and since the integral on the right hand side of above equation is smooth in $K$, we conclude that $ u \in C^{0,\ba}(K)$ and since the choice of $K$ is arbitral $u \in C_{\text{loc}}^{0,\ba}(\Om)$ with $\ba$ as above. Finally by elliptic regularity theory, we get $ u \in C_{\text{loc}}^{2,\ba}(\Om)$.\\
	Next we prove the Poho\v{z}aev identity \eqref{pi}. First note that \eqref{0.7}, \eqref{0.8} and \eqref{0.9} holds in this case also. Thus we only need  to prove \eqref{0.6}. For this, let $\widetilde{\Om} \subset \subset \Om$ be relatively compact. Then since $u \in C_{\text{loc}}^{2,\al}(\Om)$, we have
		Now using by parts, we have
		\begin{align}\label{0.4}
			-\I{\widetilde{\Om}} \De u (\na u \cdot x)dx =\I{\widetilde{\Om}} \na u\na(\na u\cdot x)dx -\I{\partial {\widetilde{\Om}}} \frac{\partial u}{\partial \nu} \na u(x)\cdot x d\sigma.
		\end{align}
	Since $u \in W^{2,2}(\Om) \cap C^{1,\al}(\bar{\Om})$, we obtain by taking the limit $\widetilde{\Om} \ra \Om$ in \eqref{0.4} and using Dominated convergence theorem that 
		\begin{align}\label{epi}
		-\I{\Om} \De u (\na u \cdot x)dx =\I{\Om} \na u\na(\na u\cdot x)dx -\I{\partial{\Om}} \frac{\partial u}{\partial \nu} \na u(x)\cdot x d\sigma.
	\end{align} 
		We have
		\begin{align*}
			\frac{\partial}{\partial x_j} (\na u \cdot x) =\frac{\partial}{\partial x_j}\left(\sum_{i=1}^{n}\frac{\partial u}{\partial x_i} x_i\right) =\sum_{i=1}^{n} \frac{\partial^2 u}{\partial x_j \partial x_i}x_i +\frac{\partial u}{\partial x_j},
		\end{align*}
		so that
		\begin{align*}
			\na u \cdot \na (\na u \cdot x)=&\sum_{j=1}^{n} \frac{\partial u}{\partial x_j}\left(\sum_{i=1}^{n} \frac{\partial^2 u}{\partial x_j \partial x_i}x_i +\frac{\partial u}{\partial x_j}\right)\\ 
			=& \sum_{i=1}^{n}\frac{\partial }{\partial x_i}\left(\frac12\sum_{j=1}^{n}\left(\frac{\partial u}{\partial x_j}\right)^2 \right)x_i +|\na u|^2\\
			=& \frac12 \na (|\na u|^2)\cdot x +|\na u|^2.
		\end{align*}
		Thus 
		\begin{align}\nonumber\label{0.5}
			\I\Om \na u\na(\na u\cdot x)dx =&\I\Om  \frac12 \na (|\na u|^2)\cdot x +|\na u|^2\\
			=& \I\Om |\na u|^2 -\frac{n}{2} \I\Om |\na u|^2 +\frac12 \I{\partial \Om}|\na u|^2 \nu(x)\cdot x d\sigma.
		\end{align}
		Now we notice that since $u =0$ on $\partial \Om$, we have $\na u(x)=\frac{\partial u}{\partial \nu}(x).\nu(x) $ for every $x \in \partial \Om$, so that $|\na u| = \left|\frac{\partial u}{\partial \nu}\right|$ and $\na u \cdot x =\ds\frac{\partial u}{\partial \nu} \nu(x)\cdot x$ on $\partial \Om$. Taking this into account and inserting \eqref{0.5} in \eqref{epi}, we have \eqref{0.6}.
 \QED
From now on we assume that $\la>0$. Under this assumption and taking into account all the discussion carried out so far, we can start our study of the solvability of problem \eqref{e1.1}. To this end, since the linear case $p=1$ and the superlinear case $p>1$ present some significant differences, we treat these cases separately.
\subsection{The linear case $p=1$}\label{SLC}
 We begin by studying the solvability of \eqref{e1.1} in the linear case, that is, $p=1$.
As we shall see, the existence of solutions to problem \eqref{e1.1} for a given $\la>0$ is related to the first Dirichlet eigenvalues of $(-\De)^s$ and of $ \mc L$, which are simple and we recall their characterization below: 
\begin{Definition}
	Let $\Om$ be a bounded open set. We define
	\begin{enumerate}
		\item the first Dirichlet eigenvalue of $(-\De)^s$ in $\Om$ as follows
		\begin{equation}\label{el1s}
			\la_{1,s}:=\inf\{[u]_s^2: u \in C_0^\infty(\Om) ~\text{and}~|u|_2=1\};
		\end{equation}
		\item the first Dirichlet eigenvalue of $\mc L$ in $\Om$ as
		\begin{equation}\label{el1}
			\la_1 := \inf\{ \mc G(u)^2: u \in C_0^\infty(\Om) ~\text{and}~|u|_2=1\}.
		\end{equation}
	\end{enumerate}
\end{Definition}
Before starting the proof of Theorem \ref{tlc}, we recall (for the sake of completeness) the main properties of $\la_{1,s}$ and $\la_1$.
\begin{Remark}\label{r0.3}
	We recall that since $\Om$ is bounded both $\la_{1,s}$ and $\la_1$ are achieved in the space $\Pi(\Om)$. This means 
			\begin{equation*}
			\text{there exists}~v_0 \in \Pi(\Om) :|v_0|_2=1~\text{and}~[v_0]_s^2=\la_{1,s}>0,
		\end{equation*}
		\begin{equation*}
		\text{ and there exists}~w_0 \in \Pi(\Om) :|w_0|_2=1~\text{and}~\mc G(w_0)^2=\la_{1}>0.
	\end{equation*}
Also the above functions $v_0$ and $w_0$ can be chosen to be nonnegative and then using regularity results and maximum principle, we have $v_0, w_0$ are strictly positive in $\Om$. Lastly, since $v_0$ and $w_0$ are constrained minimizers of $u \mapsto [u]_s^2$ and $u \mapsto \mc G(u)^2$ respectively, by the Lagrange Multiplier rule we easily see that 
\begin{equation*}
	\I{\mb R^n}\I{\mb R^n}\frac{(v_0(x)-v_0(y))(\varphi(x)-\varphi(y))}{|x-y|^{N+2s}}dxdy =\la_{1,s}\I\Om v_0\varphi~\text{for every}~\varphi \in \Pi(\Om),
\end{equation*}
	\begin{align*}
\text{and}~	\I\Om \na w_0\na \varphi dx + \I{\mb R^n}\I{\mb R^n} \frac{(w_0(x)-w_0(y))(\varphi(x)-\varphi(y))}{|x-y|^{N+2s}}&dxdy\\
	=& \la_1 \I\Om w_0 \varphi dx ~\text{for all}~\varphi \in \Pi(\Om).
\end{align*}
\end{Remark}
We now begin the proof of Theorem \ref{tlc}. We shall give the proof through several independent results. To begin with, we prove a lemma linking the existence of solutions to \eqref{e1.1} with the existence of constrained minimizer for a suitable functional. To this end, for any $u \in \Pi(\Om)$, we define
\begin{equation}
	\mc P_\la (u):= \mc G(u)^2-\la |u|_2^2.
\end{equation}
We also define the manifold $\mc V(\Om):= \Pi(\Om)\cap \mc H (\Om)$, where
\begin{equation*}
	\mc H(\Om):=\{u \in L^{2^*}(\mb R^n): \|u\|_{HL}=1\}.
\end{equation*}	
Note that the above set is well defined by Hardy-littlewood Inequality (see Proposition \ref{hls}). Then we have the following lemma.
\begin{Lemma}\label{le4.7}
	For every $\la >0$, we define
	\begin{align*}
		S_{H,L}(\la):= \inf_{u \in \mc V (\Om)} \mc P_\la(u). 
	\end{align*}
	We assume that $S_{H,L}(\la)>0$ and that $S_{H,L}(\la)$ is achieved, that is, there exists some function $ \psi \in \mc V(\Om)$ such that $\mc P_\la(\psi)=S_{H,L}(\la)$.\\
	Then, there exists a solution to \eqref{e1.1}.
\end{Lemma}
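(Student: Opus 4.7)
The plan is to apply a Lagrange multiplier rule to the constrained minimizer $\psi$ and then rescale it in order to absorb the Lagrange multiplier, landing on a genuine solution of \eqref{e1.1} with $p=1$. First, I would recast $\mc V(\Om)$ as the zero level set of the $C^1$ functional $J(u):=\|u\|_{HL}^{2\cdot 2_\mu^\ast}-1$ on $\Pi(\Om)$ and observe that $J'(\psi)[\psi]=2\cdot 2_\mu^\ast\neq 0$, so the constraint is nondegenerate at $\psi$. Since $\mc P_\la$ is also $C^1$, the Lagrange multiplier theorem produces $\mu\in\mb R$ such that, for every $\varphi\in\Pi(\Om)$,
\begin{equation*}
\ld\psi,\varphi\rd_{\mc G}-\la\I{\Om}\psi\varphi\,dx=\mu\cdot 2_\mu^\ast\I{\Om}\left(\I{\Om}\frac{|\psi(y)|^{2_\mu^\ast}}{|x-y|^\mu}\,dy\right)|\psi(x)|^{2_\mu^\ast-2}\psi(x)\varphi(x)\,dx.
\end{equation*}
Testing with $\varphi=\psi$ and using the constraint $\|\psi\|_{HL}^{2\cdot 2_\mu^\ast}=1$ yields $\mu\cdot 2_\mu^\ast=\mc P_\la(\psi)=S_{H,L}(\la)>0$, and in particular $\mu>0$.

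Next, I would argue that $\psi$ may be taken nonnegative. The map $u\mapsto|u|$ leaves $|u|_2$ and $\|u\|_{HL}$ unchanged; moreover, by Stampacchia's chain rule $|\nabla|u||=|\nabla u|$ a.e.\ in $\mb R^n$, while the pointwise inequality $||a|-|b||\leq|a-b|$ gives $[|u|]_s\leq[u]_s$, so that $\mc G(|u|)\leq\mc G(u)$. Hence $|\psi|$ is again a minimizer of $\mc P_\la$ on $\mc V(\Om)$, and we may assume $\psi\geq 0$ in $\mb R^n$. I would then define $u:=t\psi$ with a constant $t>0$ to be chosen and use the Euler-Lagrange identity together with the $(2\cdot 2_\mu^\ast-1)$-homogeneity of the Choquard term to observe that $u$ is a weak solution of \eqref{e1.1} (with $p=1$) if and only if $t\mu\cdot 2_\mu^\ast=t^{2\cdot 2_\mu^\ast-1}$, i.e.
\begin{equation*}
t=\bigl(\mu\cdot 2_\mu^\ast\bigr)^{1/(2(2_\mu^\ast-1))}=S_{H,L}(\la)^{1/(2(2_\mu^\ast-1))}>0.
\end{equation*}
With this choice, $u=t\psi\in\Pi(\Om)$ is nonnegative, nontrivial (since $\|\psi\|_{HL}=1$ forces $\psi\not\equiv 0$) and solves \eqref{e1.1}.

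There is no serious obstacle in this reduction: the positivity of the Lagrange multiplier, which is the only quantity one needs in order to make the rescaling meaningful, is forced by the standing hypothesis $S_{H,L}(\la)>0$, and the replacement of $\psi$ by $|\psi|$ supplies the sign condition $u\geq 0$ required in \eqref{e1.1}. The genuinely hard part of the strategy is not this lemma but rather its hypothesis: one must show that $S_{H,L}(\la)$ is both strictly positive and attained in $\mc V(\Om)$, which is exactly the issue flagged in the introduction, where the extra nonlocal term $[u]_s^2$ obstructs the classical Brezis--Nirenberg scaling used to push the minimax level below $S_{H,L,C}$.
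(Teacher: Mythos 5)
Your argument is correct and is essentially the paper's own proof: replace $\psi$ by $|\psi|$, apply the Lagrange multiplier rule on the constraint $\|\cdot\|_{HL}=1$, identify the multiplier with $S_{H,L}(\la)>0$ by testing with $\psi$, and rescale to absorb it. Your scaling exponent $1/(2(2_\mu^\ast-1))$ coincides with the paper's $(n-2)/(2n+4-2\mu)$, so the two proofs agree in every essential step.
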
	  
Before giving the proof of Lemma \ref{le4.7}, we list in the next remark some properties of the number $S_{H,L}(\la)$ which will be used later.
\begin{Remark}\label{r4.8}
	From the definition of $\mc S_{H,L}(\la)$ we easily infer that
	\begin{enumerate}[label=(\roman*)]
		\item $S_{H,L}(\la) \leq S_{H,L,C}:=\inf\limits_{u \in C_0^\infty({\mb R}^n)\setminus \{0\}} \frac{\| u\|^2}{\left(\I{\mb R^n}\I{\mb R^n}\frac{|u(x)|^{2_\mu^\ast}|u(y)|^{2_\mu^\ast}}{|x-y|^\mu}dxdy\right)^\frac{1}{2_\mu^\ast}}$ for every $\la >0$;
		\item $S_{H,L}(\la) \leq S_{H,L}(\nu)$ for every $0<\nu\leq\la$.
	\end{enumerate}
	Also, owing to the definition of $\la_1$ in \eqref{el1} (and recalling that $\la_1$ is achieved in the space $\Pi(\Om)$), it is easy to see that
	\begin{equation*}
		S_{H,L}(\la) \geq 0 \iff 0<\la \leq \la_1.
	\end{equation*}
\end{Remark}
We are now ready to give the proof of Lemma \ref{le4.7}.

\textbf{Proof of Lemma \ref{le4.7}}
From assumptions, we know that there exists $\psi \in \mc V(\Om)$ a constrained minimizer for the functional $\mc P_\la$, that is,
\begin{equation*}
	\mc P_\la (\psi)=S_{H,L}(\la).
\end{equation*}
Since $\mc P_\la(|\psi|) \leq \mc P_\la(\psi)$, without loss of generality we assume that $\psi \geq 0$ a.e. in $\Om$. Now using the Lagrange Multiplier Rule, there exists $\nu \in \mb R$ such that
\begin{align}\nonumber\label{4.24}
	\I\Om\na \psi \na \varphi + &\I{\mb R^n}\I{\mb R^n} \frac{(\psi(x)-\psi(y))(\varphi(x)-\varphi(y))}{|x-y|^{n+2s}}dxdy\\
	=&\nu\I\Om\I\Om\frac{|\psi(y)|^{2_\mu^*}|\psi(x)|^{2_\mu^*-2}\psi(x)\varphi(x)}{|x-y|^\mu}dxdy + \la \I\Om \psi \varphi~\text{for all}~\varphi \in \Pi(\Om).
\end{align}
Using $\psi$ as test function in \eqref{4.24}, we get
\begin{align*}
	\nu = \nu \|\psi\|_{HL}^{2_\mu^\ast}=\mc G(\psi)^2-\la\|\psi\|_2^2=\mc  P_\la(\psi)=S_{H,L}(\la)>0.
\end{align*}
As a consequence, setting $u =S_{H,L}(\la)^{\frac{(n-2)}{2n+4-2\mu}}\psi$, we see that $u \geq 0$ a.e. in $\Om$ and for every $\varphi \in \Pi(\Om)$ using \eqref{4.24} we have
\begin{align*}
	\I\Om\na u \na \varphi + \I{\mb R^n}\I{\mb R^n} \frac{(u(x)-u(y))(\varphi(x)-\varphi(y))}{|x-y|^{n+2s}}&dxdy\\
	=&\I\Om\I\Om\frac{|u(y)|^{2_\mu^*}|u(x)|^{2_\mu^*-2}u(x)\varphi(x)}{|x-y|^\mu}dxdy + \la \I\Om u \varphi.
\end{align*}
This completes the proof. \QED
As we see in Lemma \ref{le4.7}, sign of the real number $S_{H,L}(\la)$ plays an important role to study the solvability of problem \eqref{e1.1}. The following result combined with Remark \ref{r4.8} provide further informations in this regard:
\begin{Lemma}\label{le4.9}
	For every $0<\la \leq \la_{1,s}$, we have
	\begin{equation*}
		S_{H,L}(\la)=S_{H,L,C}>0.
	\end{equation*}
\end{Lemma}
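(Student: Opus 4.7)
The plan is to establish both inequalities $S_{H,L}(\la) \leq S_{H,L,C}$ and $S_{H,L}(\la) \geq S_{H,L,C}$; the first is already recorded in Remark \ref{r4.8}(i), so the real work is to prove the lower bound. The positivity $S_{H,L,C}>0$ is immediate from its definition as a sharp Sobolev-type constant.

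For the lower bound, I would start from an arbitrary $u \in \mc V(\Om)$, i.e.\ $u \in \Pi(\Om)$ with $\|u\|_{HL}=1$, and write
\begin{equation*}
\mc P_\la(u) \;=\; \|\na u\|_{L^2(\mb R^n)}^2 \;+\; \bigl([u]_s^2 - \la |u|_2^2\bigr).
\end{equation*}
The first summand is controlled using the classical Hardy-Littlewood-Sobolev inequality on $\mb R^n$: since $u$ vanishes outside $\Om$ and $\|u\|_{HL}=1$, the definition of $S_{H,L,C}$ (together with Remark \ref{r3.1}(1), which tells us the constant is independent of the ambient set) gives $\|\na u\|_{L^2(\mb R^n)}^2 \geq S_{H,L,C}$. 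The second summand is nonnegative: by the variational characterization \eqref{el1s} of $\la_{1,s}$ (which extends from $C_0^\infty(\Om)$ to all of $\Pi(\Om)$ by density), we have $[u]_s^2 \geq \la_{1,s}|u|_2^2$, so the assumption $0<\la\leq \la_{1,s}$ yields $[u]_s^2 - \la|u|_2^2 \geq (\la_{1,s}-\la)|u|_2^2 \geq 0$.

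Combining the two estimates gives $\mc P_\la(u) \geq S_{H,L,C}$ for every $u \in \mc V(\Om)$, whence $S_{H,L}(\la) \geq S_{H,L,C}$. Pairing this with Remark \ref{r4.8}(i) closes the proof. I do not foresee any genuine obstacle: the only mild point to be careful about is justifying that the classical HLS-Sobolev inequality with constant $S_{H,L,C}$ applies to an arbitrary $u \in \Pi(\Om)$ (and not only to test functions), but this follows directly from the density of $C_0^\infty(\Om)$ in $\Pi(\Om)$ and the continuous embedding $\Pi(\Om)\hookrightarrow L^{2^*}(\mb R^n)$ recalled in Section \ref{P}.
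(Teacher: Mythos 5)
Your proposal is correct and takes essentially the same route as the paper: the identical decomposition $\mc P_\la(u)=\|u\|^2+\bigl([u]_s^2-\la|u|_2^2\bigr)$, with the second term nonnegative via the variational characterization \eqref{el1s} of $\la_{1,s}$ and the first term bounded below by $S_{H,L,C}$. The only cosmetic difference is that the paper performs the estimate on $C_0^\infty(\Om)\cap\mc H(\Om)$ and invokes density of $C_0^\infty(\Om)$ in $\Pi(\Om)$ at the end, whereas you work directly on $\mc V(\Om)$ and use density to extend the two inequalities.
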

\begin{proof}
	Let $\la \in (0,\la_{1,s})$ be given. Using Remark \ref{r4.8}, we already know that $	S_{H,L}(\la)\leq S_{H,L,C}$. Now to prove the reverse inequality, using the definition of $\la_{1,s}$ in \eqref{el1s}, for any $u \in C^\infty_0(\Om)\cap \mc H(\Om)$, we have
	\begin{align*}
		\mc P_\la(u)= \|u\|^2+([u]_s^2-\la|u|_2^2)\geq \|u\|^2+(\la_{1,s}-\la)|u|_2^2 \geq \|u\|^2.
	\end{align*}
	As a consequence, since $C_0^\infty(\Om)$ is dense in $\Pi(\Om)$, we obtain
	\begin{align*}
		S_{H,L}(\la) =&\inf\{\mc P_\la(u): u \in C^\infty_0(\Om)\cap \mc H(\Om)\}\\
		\geq& \inf\{\|u\|^2: u \in C^\infty_0(\Om)\cap \mc H(\Om) \} =S_{H,L,C}.
	\end{align*}
	Thus we get the desired result. \QED
\end{proof}
By combining Lemma \ref{le4.9} and Remark \ref{r4.8} we have the following
\begin{enumerate}
	\item $S_{H,L}(\la) =S_{H,L,C}$ for every $0<\la \leq \la_{1,s}$;
	\item $S_{H,L}(\la) \geq 0$ for every $0<\la \leq \la_1$;
	\item $S_{H,L}(\la) <0$ for every $\la >\la_1$.
\end{enumerate}

\begin{Lemma}\label{l0.7}
	The mapping $\la \mapsto S_{H,L}(\la)$ is continuous from left on $(0,\infty)$.
\end{Lemma}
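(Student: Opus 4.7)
The plan is to reduce left-continuity to a one-line test-function argument based on the elementary fact that, for each fixed $u\in \mc V(\Om)$, the map $\la\mapsto \mc P_\la(u)=\mc G(u)^2-\la|u|_2^2$ is affine and strictly decreasing in $\la$. This has two consequences I would exploit: first, $S_{H,L}$ is non-increasing on $(0,\infty)$ (as the infimum of non-increasing affine functions), and second, if $\la_k\nearrow \la_0$, then for any admissible competitor $\psi$ we have $\mc P_{\la_k}(\psi)=\mc P_{\la_0}(\psi)+(\la_0-\la_k)|\psi|_2^2$, with the correction term non-negative and small for $k$ large.

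Concretely, fix $\la_0>0$ and a sequence $\la_k\nearrow \la_0$. The plan is to prove the two matching inequalities separately. The ``$\liminf$'' inequality $\liminf_k S_{H,L}(\la_k)\ge S_{H,L}(\la_0)$ follows immediately from monotonicity, since $\la_k\le \la_0$ implies $\mc P_{\la_k}(u)\ge \mc P_{\la_0}(u)$ for every $u\in\mc V(\Om)$, and hence $S_{H,L}(\la_k)\ge S_{H,L}(\la_0)$ for all $k$. For the ``$\limsup$'' inequality, fix $\e>0$ and, by definition of $S_{H,L}(\la_0)$ as an infimum, select a single $\psi_\e\in \mc V(\Om)$ such that
\[
\mc P_{\la_0}(\psi_\e)\;<\;S_{H,L}(\la_0)+\e.
\]
Using $\psi_\e$ as a competitor at level $\la_k$, I would estimate
\[
S_{H,L}(\la_k)\;\le\;\mc P_{\la_k}(\psi_\e)\;=\;\mc P_{\la_0}(\psi_\e)+(\la_0-\la_k)|\psi_\e|_2^2\;<\;S_{H,L}(\la_0)+\e+(\la_0-\la_k)|\psi_\e|_2^2.
\]
Since $\psi_\e$ is held fixed while $k\to\infty$, the quantity $|\psi_\e|_2$ is a fixed constant, so taking $\limsup_k$ yields $\limsup_k S_{H,L}(\la_k)\le S_{H,L}(\la_0)+\e$, and then letting $\e\to 0^+$ gives the desired bound. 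Combining both inequalities closes the proof.

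There is no genuine obstacle: the argument is entirely a consequence of the affine dependence of $\mc P_\la(u)$ on $\la$. The only subtle point worth emphasizing is \emph{why} the lemma is stated only for left-continuity and not for two-sided continuity. In the argument above the crucial ingredient is that $\la_k\le \la_0$ allowed us to pick a \emph{single} near-minimizer $\psi_\e$ at $\la_0$ and reuse it for every $\la_k$; the $L^2$-norm $|\psi_\e|_2$ is then harmless. For a sequence $\la_k\searrow \la_0$ one would instead need to control the $L^2$-norms of near-minimizers at $\la_k$, which is not a priori bounded (indeed, as $\la$ approaches $\la_1$ from below, minimizing sequences can in principle concentrate on the first eigenfunction), so the symmetric argument breaks down. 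Thus the statement ``continuous from the left'' is exactly what the elementary argument gives.
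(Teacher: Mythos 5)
Your proposal is correct and follows essentially the same route as the paper: monotonicity of $\la\mapsto S_{H,L}(\la)$ (from the affine, decreasing dependence of $\mc P_\la(u)$ on $\la$) gives one inequality, and fixing a single near-minimizer at $\la_0$ and reusing it as a competitor for $\la<\la_0$, with the error $(\la_0-\la)|\psi_\e|_2^2$, gives the other. The paper phrases this with an explicit $\de_\e=\e/(2|v|_2^2)$ instead of sequences, but the argument is identical.
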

\begin{proof}
	To prove the left continuity, let $\la_0>0$ and $ \e >0$ be given. By definition of $S_{H,L}(\la_0)$ there exists $v=v_{\e,\la_0} \in \mc V(\Om)$ such that 
	\begin{align*}
		S_{H,L}(\la_0) \leq \mc P_{\la_0}(v)<S_{H,L}(\la_0)+\frac{\e}{2}.
	\end{align*}
	From this, using the monotonicity of $S_{H,L}(\cdot)$, for every $\la <\la_0$ we obtain
	\begin{align*}
		0<S_{H,L}(\la)-S_{H,L}(\la_0) \leq& \mc P_\la(v)-S_{H,L}(\la_0)
		= (\mc P_{\la_0}(v)-S_{H,L}(\la_0))+(\la_0-\la)|v|_2^2\\
		<& \frac{\e}{2}+(\la_0-\la)|v|_2^2.
	\end{align*}
	As a consequence, setting $\de_\e:=\e/(2|v|_2^2)$, we conclude that
	\begin{equation*}
		0<S_{H,L}(\la)-S_{H,L}(\la_0)<\e ~\text{for every}~\la_0-\de_\e<\la\leq \la_0.
	\end{equation*}
	 This proves that $S_{H,L}(\cdot)$ is continuous from the left at $\la_0$.\QED
\end{proof}
In the light of results obtained so far, we can prove Theorem \ref{tlc}.\\
\textbf{Proof of Theorem \ref{tlc}}:
	To begin with, we define
	\begin{equation*}
		\la^* := \sup\{\la>0:S_{H,L}(\mu) =S_{H,L,C}~\text{for all}~0<\mu<\la\}.
	\end{equation*}
	On account of Lemma \ref{le4.9}, we see that $\la_{1,s}\leq \la^*$. Again since $S_{H,L}(\la_1)=0$ (see Remark \ref{r0.3}) and using Lemma \ref{l0.7}, we conclude that $\la^* \in [\la_{1,s},\la_1)$. We treat the following three cases separately.\\
	\textbf{Case 1}: $0<\la\leq \la_{1,s}$. Arguing by contradiction, let us suppose there exists a solution to problem \eqref{e1.1} such that $u \in \mc B$. Now set $w
	=\ds \frac{u}{\|u\|_{HL}}$. Then we have
	\begin{align*}
		\mc P_\la(w) =\frac{1}{\|u\|^2_{HL}}\mc P_\la(u)=\frac{1}{\|u\|^2_{HL}}(\mc G(u)^2-\la|u|_2^2)=\|u\|_{HL}^{2_\mu^\ast-2}\leq S_{H,L,C}.
	\end{align*}
	As a consequence of Lemma \ref{le4.9}, we obtain	
	\begin{equation*}
		\mc P_\la(w)\leq S_{H,L,C}=S_{H,L}(\la).
	\end{equation*}
	This implies that $S_{H,L}(\la)$ is achieved at $w$. Now using $\la \leq \la_{1,s}$, we have
	\begin{align*}
		S_{H,L,C} \leq \|w\|^2 =\mc P_\la(w)-([w]_s^2-\la |w|_2^2)\leq\mc P_\la(w)=S_{H,L,C}
	\end{align*}
	which shows that $S_{H,L,C}$ is achieved. \\
	On the other hand, since $\Om$ is a bounded domain we know that $S_{H,L,C}$ is never achieved and so we have a contradiction. \\
	\textbf{Case 2}:  $\la*<\la <\la_1$. Then  owing to the definition of $\la^*$, we can find $0<\mu<\la$ such that $S_{H,L}(\mu)<S_{H,L,C}$ and so from Remark \ref{r4.8}-$(ii)$ $S_{H,L}(\la)<S_{H,L,C}$. Then noting that Brezis Lieb Lemma type hold for $\|\cdot\|_{HL}$ (see \cite[Lemma 2.2]{GY1}) and following proof of \cite[Lemma 1.2]{BN} we see that $S_{H,L}(\la)$ is achieved.
	Next we show that $S_{H,L}(\la)>0$. Now after renormalization of $u$, we have $S_{H,L}(\la)=\mc P_\la(u)$. In particular, since $\la <\la_1$, we obtain
	\begin{equation*}
		S_{H,L}(\la) =|u|_2^2 \left[\frac{\mc G(u)^2}{|u|_2^2}-\la\right] \geq |u|_2^2(\la_1-\la)>0.
	\end{equation*}
	Now we have proved that $S_{H,L}(\la)$ is strictly positive and is achieved, we are then entitled to apply Lemma \ref{le4.7}, ensuring there exists a solution to \eqref{e1.1}.\\
	\textbf{Case 3}: $\la \geq \la_1$.	Arguing by contradiction, let us assume that there exists a solution to \eqref{e1.1}. Then by Maximum principle (see Proposition \ref{MP}), we have $u>0$ a.e. in $\Om$. By Remark \ref{r0.3} we know there exists $w_0 \in \Pi(\Om)$ such that $w_0 >0$ a.e. in $\Om$ and 
	\begin{align*}
		\I\Om \na w_0\na \varphi dx + \I{\mb R^n}\I{\mb R^n} \frac{(w_0(x)-w_0(y))(\varphi(x)-\varphi(y))}{|x-y|^{N+2s}}&dxdy\\
		=& \la_1 \I\Om w_0 \varphi dx ~\text{for all}~\varphi \in \Pi(\Om).
	\end{align*}
	Taking $\varphi =u$ above, we get
	\begin{align*}
		\la_1 \I\Om w_0 u =&\I\Om \na w_0\na \varphi dx + \I{\mb R^n}\I{\mb R^n} \frac{(w_0(x)-w_0(y))(\varphi(x)-\varphi(y))}{|x-y|^{N+2s}}\\
		=& \I{\Om}\I{\Om}\frac{|w_0(y)|^{2_\mu^\ast}|w_0(x)|^{2_\mu^\ast-2}w_0(x)u(x)}{|x-y|^\mu}dydx+\la\I\Om u w_0dx\\
		>&\la \I\Om u w_0dx
	\end{align*}	
	but this is a contradiction to $\la >\la_1$. This concludes the proof. \QED
\subsection{ The superlinear case} \label{SSC}
In this subsection we study the case $1 <p < 2^*-1$ and prove Theorem \ref{t4.1}.	
	We will employ Mountain Pass Theorem to obtain the existence of a solution. As in the purely local case (see \cite{GY}) the main difficulty to apply Mountain Pass Theorem consists in proving the validity of a $(PS)_c$ condition at a level $c \in \mb R$. Precisely, we have to prove that the Palais Smale condition holds for any $c$ strictly below the Mountain Pass first critical level (given in Lemma \eqref{l4.4}).
 
	 In the next lemma, we show that $J_\la$ possesses the Mountain Pass geometry.
	\begin{Lemma}\label{l4.2}
		If $1<p<2^*-1$ and $\la >0$, then the functional $J_\la$ satisfies the following properties:
		\begin{enumerate}
			\item There exists $\al$, $\sigma >0$ such that $J_\la(u)\geq \al$ for $\mc G(u)=\sigma$.
			\item There exists $e \in \Pi(\Om)$ with $\mc G(e)>\sigma$ such that $J_\la(e)<0$.
		\end{enumerate}
	\end{Lemma}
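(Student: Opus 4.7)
The plan is to verify the two geometric conditions by exploiting the fact that $\mathcal{G}$ is the natural norm on $\Pi(\Omega)$, combined with the Hardy--Littlewood--Sobolev estimate of Theorem \ref{t1.1} and the continuous embedding of $\Pi(\Omega)$ into $L^{p+1}(\Omega)$ (which holds because $p+1 < 2^*$ and $\Omega$ is bounded).

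For item (1), I would start by applying Theorem \ref{t1.1} to the Hartree term, which gives
\[
\int_\Omega \int_\Omega \frac{|u(x)|^{2_\mu^\ast}|u(y)|^{2_\mu^\ast}}{|x-y|^\mu}\,dx\,dy \leq S_{H,L,C}^{-2_\mu^\ast}\,\mathcal{G}(u)^{2\cdot 2_\mu^\ast},
\]
and then control the subcritical term via $\int_\Omega |u|^{p+1}\,dx \leq C_1 \mathcal{G}(u)^{p+1}$ for some $C_1 = C_1(\Omega,n,p)>0$, which follows from the Sobolev embedding $\Pi(\Omega) \hookrightarrow L^{p+1}(\Omega)$. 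Combining the two bounds,
\[
J_\la(u) \geq \frac{1}{2}\mathcal{G}(u)^2 - \frac{S_{H,L,C}^{-2_\mu^\ast}}{2\cdot 2_\mu^\ast}\mathcal{G}(u)^{2\cdot 2_\mu^\ast} - \frac{\la\,C_1}{p+1}\mathcal{G}(u)^{p+1}.
\]
Since both exponents $2\cdot 2_\mu^\ast$ and $p+1$ are strictly greater than $2$ (as $1<p<2^\ast-1$ and $2_\mu^\ast>1$), the right-hand side behaves like $\frac{1}{2}\sigma^2 - o(\sigma^2)$ as $\sigma = \mathcal{G}(u) \to 0^+$. Hence one may fix $\sigma > 0$ small enough that this expression is at least some $\al > 0$, which is the desired conclusion.

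For item (2), I would pick any nonzero $v \in C_0^\infty(\Omega) \subset \Pi(\Omega)$ with $v \geq 0$ and compute, for $t > 0$,
\[
J_\la(tv) = \frac{t^2}{2}\mathcal{G}(v)^2 - \frac{t^{2\cdot 2_\mu^\ast}}{2\cdot 2_\mu^\ast}\int_\Omega \int_\Omega \frac{|v(x)|^{2_\mu^\ast}|v(y)|^{2_\mu^\ast}}{|x-y|^\mu}\,dx\,dy - \frac{\la\,t^{p+1}}{p+1}\int_\Omega v^{p+1}\,dx.
\]
Since $2\cdot 2_\mu^\ast > 2$ and the coefficient of the $t^{2\cdot 2_\mu^\ast}$ term is strictly positive (as $v \not\equiv 0$), we have $J_\la(tv) \to -\infty$ as $t \to \infty$. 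Choosing $t$ large enough that simultaneously $\mathcal{G}(tv) = t\,\mathcal{G}(v) > \sigma$ and $J_\la(tv) < 0$, we set $e := tv$ and obtain the conclusion.

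There is no genuine obstacle here: the argument is the standard Mountain Pass geometry verification, and both steps rest on elementary comparisons of the exponents $2$, $p+1$ and $2\cdot 2_\mu^\ast$. The only point worth a line of care is invoking Theorem \ref{t1.1} (rather than a bare Sobolev or Hardy--Littlewood--Sobolev inequality) so that the nonlocal Gagliardo contribution to $\mathcal{G}(u)^2$ is correctly accommodated on the right-hand side.
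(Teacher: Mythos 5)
Your proof is correct; the paper itself omits the argument for Lemma \ref{l4.2}, stating only that it is standard, and what you wrote is exactly the standard Mountain Pass geometry verification it has in mind (mixed Hardy--Littlewood--Sobolev bound via Theorem \ref{t1.1} plus the Sobolev embedding for the subcritical term, then comparison of the exponents $2$, $p+1$, $2\cdot 2_\mu^\ast$). No gaps to report.
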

\begin{proof}
	The proof is standard and is thus omitted. \QED
	\end{proof}
\begin{Lemma}\label{l4.3}
	Let $1<p<2^*-1$, $\la>0$. If $\{u_k\}_{k\in \mb N}$ is a $(PS)_c$ sequence of $J_\la$, then $\{u_k\}$ is bounded. Let $u_0 \in \Pi(\Om)$ be the weak limit of $\{u_k\}_{k \in \mb N}$, then $u_0$ is a weak solution of problem \eqref{e1.1}. Moreover, $J_\la(u_0) \geq 0$.
\end{Lemma}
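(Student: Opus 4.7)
The plan is to follow the standard three-step scheme for a functional combining a subcritical perturbation with critical Choquard growth.

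\emph{Boundedness.} Since $1<p<2^*-1$, we have $2<p+1<2^*$, and moreover $2\cdot 2_\mu^*>2^*>p+1$ (the first inequality uses $\mu<n$). I would therefore combine $J_\la(u_k)$ with $\langle J_\la'(u_k),u_k\rangle$ using the factor $1/(p+1)$: a direct computation gives
\begin{equation*}
J_\la(u_k)-\frac{1}{p+1}\langle J_\la'(u_k),u_k\rangle=\left(\frac12-\frac{1}{p+1}\right)\mc G(u_k)^2+\left(\frac{1}{p+1}-\frac{1}{2\cdot 2_\mu^*}\right)\|u_k\|_{HL}^{2\cdot 2_\mu^*},
\end{equation*}
where both coefficients are strictly positive. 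Using $|J_\la(u_k)|\leq c+o(1)$ and $|\langle J_\la'(u_k),u_k\rangle|\leq o(1)\,\mc G(u_k)$, and discarding the nonnegative HLS contribution on the right, one obtains an inequality of the form $A\,\mc G(u_k)^2\leq c+o(1)+o(1)\,\mc G(u_k)$, which yields boundedness of $\{\mc G(u_k)\}$ in $\Pi(\Om)$.

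\emph{Weak limit solves \eqref{e1.1}.} Since $\Pi(\Om)$ is a Hilbert space and $\Om$ is bounded, up to a subsequence $u_k\rightharpoonup u_0$ weakly in $\Pi(\Om)$; by the Rellich--Kondrachov theorem applied through $\Pi(\Om)\hookrightarrow H_0^1(\Om)$, one also has $u_k\to u_0$ strongly in $L^r(\Om)$ for every $r\in[1,2^*)$ and a.e.\ in $\Om$. Fix $\varphi\in\Pi(\Om)$; I would pass to the limit in $\langle J_\la'(u_k),\varphi\rangle = o(1)$. The bilinear mixed-order form converges by weak convergence in $\Pi(\Om)$; the subcritical perturbation $\la\int_\Om u_k^p\varphi\,dx$ converges by strong $L^{p+1}$ convergence and H\"older's inequality. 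For the Choquard term, I would use that $\{|u_k|^{2_\mu^*}\}$ is bounded in $L^{2n/(2n-\mu)}(\Om)$ and converges a.e., hence weakly in that space; by the Hardy--Littlewood--Sobolev inequality the Riesz convolution $\int_\Om|u_k(y)|^{2_\mu^*}|x-y|^{-\mu}\,dy$ is then bounded in $L^{2n/\mu}(\Om)$. Pairing this with the a.e.\ convergent, equi-integrable sequence $\{|u_k|^{2_\mu^*-2}u_k\varphi\}$ whose a.e.\ limit is $|u_0|^{2_\mu^*-2}u_0\varphi$, and invoking Vitali's theorem together with the Brezis--Lieb-type lemma for HLS integrals (see \cite[Lemma 2.2]{GY1}), I obtain the convergence of the Choquard term to the expected limit. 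This shows $\langle J_\la'(u_0),\varphi\rangle=0$ for every $\varphi\in\Pi(\Om)$, so $u_0$ is a weak solution of \eqref{e1.1}.

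\emph{Nonnegativity of $J_\la(u_0)$.} Using $\langle J_\la'(u_0),u_0\rangle=0$, the same algebraic identity as in the first step, now evaluated at $u_0$, gives
\begin{equation*}
J_\la(u_0)=\left(\frac12-\frac{1}{p+1}\right)\mc G(u_0)^2+\left(\frac{1}{p+1}-\frac{1}{2\cdot 2_\mu^*}\right)\|u_0\|_{HL}^{2\cdot 2_\mu^*}\geq 0,
\end{equation*}
since both coefficients are strictly positive.

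The main obstacle I anticipate is the rigorous passage to the limit in the doubly nonlinear Choquard term: the singular convolution kernel couples the a.e.\ limits of $u_k$ in a nontrivial way, so direct dominated convergence is not available and one must combine boundedness in a precise Lebesgue space, a.e.\ convergence, a tightness/Vitali argument, and the Brezis--Lieb decomposition for HLS integrals.
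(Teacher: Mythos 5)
Your argument is correct and is essentially the same as the paper's, which simply reduces (after a WLOG nonnegativity remark) to the standard scheme of \cite[Lemma 2.2]{GY}: boundedness from the $J_\la-\frac{1}{p+1}\langle J_\la',\cdot\rangle$ combination, passage to the limit in the Choquard term via boundedness in the dual Lebesgue space plus a.e.\ convergence, and nonnegativity of $J_\la(u_0)$ from the same algebraic identity at the critical point. You merely spell out the details that the paper delegates to the reference, so no substantive difference.
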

\begin{proof}First note that $J_\la(|u|) \leq J_\la(u)$, $u \in \Pi(\Om)$, without loss generality we assume that $\{u_k\}_{k \in \mb N}$ is a sequence of nonnegative functions.
	Since $\{u_k\}_{k \in \mb N}$ is a $(PS)_c$ sequence, we can easily find $M >0$ such that 
	\begin{equation*}
		|J_\la(u_k)| \leq M,~|\ld J_\la^\prime(u_k),u_k/\mc G(u_k)\rd | \leq M.
	\end{equation*}
  From here it is easy standard to show that $\{u_k\}$ is bounded in $\Pi(\Om)$ (see for instance \cite[Lemma 2.2]{GY}).
Since $\Pi(\Om)$ is a Hilbert space, we can assume that up to a subsequence, there exists $v \in \Pi(\Om)$, $v \geq 0$ a.e. in $\Om$ such $ u_k \rightharpoonup v$ weakly in $\Pi(\Om)$ and $u_k \ra v$ a.e. in $\Om$. 
Rest of the proof follows similarly as the proof of \cite[Lemma 2.2]{GY}. \QED 
\end{proof}
In the next lemma, we give the threshold value below which the energy functional $J_\la^+$ satisfies the $(PS)_c$ condition. This will play an important role in applying the critical point theorems.
\begin{Lemma}\label{l4.4}
	Assume that $1<q<2^*-1$ and $\la >0$. If $\{u_k\}_{k \in \mb N}$ is a $(PS)_c$ sequence of $J_\la$ with 
	\begin{equation}
		c < \frac{n+2-\mu}{4n-2\mu} S_{H,L,C}^\frac{2n-\mu}{n+2-\mu},
	\end{equation}
then $\{u_k\}_{k \in \mb N}$ has a convergent subsequence.
\end{Lemma}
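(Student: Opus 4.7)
\textbf{Proof proposal for Lemma \ref{l4.4}.} The plan is to implement a standard splitting argument based on the Brezis--Lieb lemma adapted to the Choquard nonlinearity, combined with the sharp mixed Hardy--Littlewood--Sobolev inequality $S_{H,L,M}(\Omega)=S_{H,L,C}$ of Theorem \ref{t1.1}. Starting from a $(PS)_c$-sequence $\{u_k\}\subset\Pi(\Omega)$, we already know from Lemma \ref{l4.3} that the sequence is bounded in the Hilbert space $\Pi(\Omega)$ and, up to a subsequence, $u_k\rightharpoonup u_0$ weakly in $\Pi(\Omega)$, $u_k\to u_0$ a.e. on $\Omega$ and in $L^q(\Omega)$ for every $q\in[1,2^*)$, with $u_0$ a weak solution of \eqref{e1.1} satisfying $J_\la(u_0)\ge 0$ and $\langle J_\la'(u_0),u_0\rangle =0$.

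Setting $v_k:=u_k-u_0$, I would first apply the classical Brezis--Lieb decompositions for the quadratic form $\mathcal G(\cdot)^2$ (separately for $\|\nabla u\|_{L^2}^2$ and for the Gagliardo seminorm $[u]_s^2$), the Lebesgue Brezis--Lieb identity for $|u|_{p+1}^{p+1}$, and the nonlocal Brezis--Lieb lemma for the Choquard term (as recalled in \cite{GY1}, Lemma 2.2) to get, as $k\to\infty$,
\begin{align*}
\mathcal G(u_k)^2 &= \mathcal G(v_k)^2+\mathcal G(u_0)^2+o(1),\\
\iint\frac{|u_k(x)|^{2_\mu^*}|u_k(y)|^{2_\mu^*}}{|x-y|^\mu}dxdy &= \iint\frac{|v_k(x)|^{2_\mu^*}|v_k(y)|^{2_\mu^*}}{|x-y|^\mu}dxdy + \iint\frac{|u_0(x)|^{2_\mu^*}|u_0(y)|^{2_\mu^*}}{|x-y|^\mu}dxdy + o(1).
\end{align*}
The subcritical perturbation disappears in the limit since $\Pi(\Omega)\hookrightarrow\hookrightarrow L^{p+1}(\Omega)$ by the compact Sobolev embedding on the bounded domain $\Omega$, so $|v_k|_{p+1}\to 0$.

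Combining these with $J_\la(u_k)\to c$ and testing $J_\la'(u_k)\to 0$ against $u_k$ yields, using $\langle J_\la'(u_0),u_0\rangle=0$,
\begin{align*}
J_\la(u_0)+\tfrac12\mathcal G(v_k)^2-\tfrac{1}{2\cdot 2_\mu^*}\iint\frac{|v_k(x)|^{2_\mu^*}|v_k(y)|^{2_\mu^*}}{|x-y|^\mu}dxdy &=c+o(1),\\
\mathcal G(v_k)^2-\iint\frac{|v_k(x)|^{2_\mu^*}|v_k(y)|^{2_\mu^*}}{|x-y|^\mu}dxdy &=o(1).
\end{align*}
Passing to a further subsequence we may assume $\mathcal G(v_k)^2\to b\ge 0$, and then also the Choquard integral tends to $b$. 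The sharp mixed Hardy--Littlewood--Sobolev inequality (Theorem \ref{t1.1}) applied to $v_k$ gives
\[
S_{H,L,C}\,b^{1/2_\mu^*} \le b,
\]
so that either $b=0$ or $b\ge S_{H,L,C}^{2_\mu^*/(2_\mu^*-1)} = S_{H,L,C}^{(2n-\mu)/(n+2-\mu)}$. In the second alternative, using $2_\mu^*-1=(n+2-\mu)/(n-2)$,
\[
c=J_\la(u_0)+b\Bigl(\tfrac12-\tfrac{1}{2\cdot 2_\mu^*}\Bigr)\ge \frac{n+2-\mu}{4n-2\mu}\,S_{H,L,C}^{\frac{2n-\mu}{n+2-\mu}},
\]
which contradicts the hypothesis on $c$. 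Hence $b=0$, i.e. $\mathcal G(v_k)\to 0$, so $u_k\to u_0$ strongly in $\Pi(\Omega)$.

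The main obstacle is the nonlocal Brezis--Lieb identity for the Choquard term: unlike the local energy terms, this requires a dedicated argument (pointwise convergence plus a Vitali-type truncation against the Hardy--Littlewood--Sobolev bilinear form), but it is already available in the Choquard literature and may be cited from \cite{GY1}. Once that is in hand, the argument above is essentially algebraic; one only has to verify carefully that the exponent arithmetic matches the threshold $(n+2-\mu)/(4n-2\mu)$ and that the mixed inequality $S_{H,L,M}(\Omega)=S_{H,L,C}$ may legitimately be applied to $v_k\in\Pi(\Omega)$, which follows since $v_k$ vanishes a.e. outside $\Omega$ by Remark \ref{remarkdef}.
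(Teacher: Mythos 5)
Your argument is correct and is essentially the same Brezis--Lieb splitting and energy-threshold argument that the paper relies on: the paper's proof of Lemma \ref{l4.4} simply refers to \cite[Lemma 2.4]{GY}, which proceeds exactly as you do (decompose $u_k=u_0+v_k$, use the Brezis--Lieb lemma for the Choquard term from \cite{GY1}, compactness of the subcritical term, the sharp constant $S_{H,L,M}(\Omega)=S_{H,L,C}$, and $J_\la(u_0)\ge 0$ from Lemma \ref{l4.3} to force $\mathcal G(v_k)\to 0$ below the level $\frac{n+2-\mu}{4n-2\mu}S_{H,L,C}^{\frac{2n-\mu}{n+2-\mu}}$). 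The exponent arithmetic in your write-up checks out, so no gap to report.
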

\begin{proof}
	The proof is similar as the proof of \cite[Lemma 2.4]{GY}. \QED
	\end{proof}
\begin{Lemma}\label{l4.5}
	There exists $w_\e$ such that  
	\begin{equation}
		\sup\limits_{s \geq 0} J_\la(sw_\e) < 	\frac{n+2-\mu}{4n-2\mu}S_{H,L,C}^\frac{2n-\mu}{n-\mu+2}
	\end{equation}
provided that either
\begin{enumerate}
	\item $n> \max\left\{\min\left\{\frac{2(p+3)}{p+1},2+\frac{\mu}{p+1},2\left(1+\frac{2-2s}{p-1}\right)\right\},\frac{2(p+1)}{p}\right\}$ and $\la >0$, or
	\item $n\leq \max\left\{\min\left\{\frac{2(p+3)}{p+1},2+\frac{\mu}{p+1},2\left(1+\frac{2-2s}{p-1}\right)\right\},\frac{2(p+1)}{p}\right\}$ and $\la$ is sufficiently large.
\end{enumerate}
\end{Lemma}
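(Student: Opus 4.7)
\textbf{Proof plan for Lemma \ref{l4.5}.} The plan is to use the classical concentration method with truncated Aubin--Talenti bubbles, adapted to the mixed operator. Fix $x_0\in\Om$, a cutoff $\psi\in C_c^\infty(\Om)$ equal to $1$ in a neighbourhood of $x_0$, and set $w_\e(x):=\psi(x)\,V_{\e,x_0}(x)$, where $V_{\e,x_0}$ is the family extremising $S_{H,L,C}$ on $\R^n$ from Remark \ref{r3.1}(3). The scaling identity $[V_{\e,0}]_s^2=\e^{2-2s}[\mc U]_s^2$ already recorded after the statement of Theorem \ref{t1.2}, combined with the standard Brezis--Nirenberg cut-off estimates for the remaining quantities, yields as $\e\to 0^+$
\[\mc G(w_\e)^2=\|\na\mc U\|_{L^2}^2+[\mc U]_s^2\,\e^{2-2s}+O(\e^{n-2}),\qquad \|w_\e\|_{HL}^2=\|\mc U\|_{HL}^2+o(1),\]
together with $\int_\Om w_\e^{p+1}\,dx\sim\e^{E(n,p)}$, where $E(n,p):=\min\{n-(n-2)(p+1)/2,\,(n-2)(p+1)/2\}$ (with a $|\log\e|$ factor at the transition $p+1=n/(n-2)$).

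Next I would study the scalar function $t\mapsto J_\la(tw_\e)$. It is positive for small $t>0$ and diverges to $-\infty$, so it attains its maximum at some $t_\e>0$ solving the first-order condition
\[t_\e\,\mc G(w_\e)^2=t_\e^{2\cdot 2_\mu^*-1}\|w_\e\|_{HL}^{2\cdot 2_\mu^*}+\la\,t_\e^{p}\!\int_\Om w_\e^{p+1}\,dx.\]
Substituting this relation back into $J_\la(t_\e w_\e)$ yields the algebraic identity
\[J_\la(t_\e w_\e)=\frac{n+2-\mu}{2(2n-\mu)}\,t_\e^2\mc G(w_\e)^2-\frac{(2\cdot 2_\mu^*-p-1)\la}{2\cdot 2_\mu^*(p+1)}\,t_\e^{p+1}\!\int_\Om w_\e^{p+1}\,dx.\]
Dropping the nonnegative $\la$-term in the first-order condition gives the upper bound $t_\e\le\bigl(\mc G(w_\e)^2/\|w_\e\|_{HL}^{2\cdot 2_\mu^*}\bigr)^{1/(2\cdot 2_\mu^*-2)}$, and a direct continuity argument produces a uniform lower bound $t_\e\ge c>0$ as $\e\to 0^+$ (using $p>1$ and $E(n,p)>0$). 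Using these bounds and the asymptotic expansions above converts the first term into $\tfrac{n+2-\mu}{4n-2\mu}S_{H,L,C}^{(2n-\mu)/(n+2-\mu)}$ plus positive errors in $\e$ coming from the three corrections in $\mc G(w_\e)^2/\|w_\e\|_{HL}^2$.

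The desired strict inequality then reduces to a pure matching of $\e$-exponents: one needs $\la\,\e^{E(n,p)}$ to dominate the positive error as $\e\to 0^+$. Rewriting each sub-condition $E(n,p)<(\text{error exponent})$ as an inequality on $n$, and imposing the side constraint $n>2(p+1)/p$ which places us in the favourable regime $p+1>n/(n-2)$ where $E(n,p)=n-(n-2)(p+1)/2$ and the $|\log\e|$ transition is avoided, one obtains precisely the three entries $\tfrac{2(p+3)}{p+1}$, $2+\tfrac{\mu}{p+1}$ and $2\bigl(1+\tfrac{2-2s}{p-1}\bigr)$ of the minimum in the statement of case~(1); any one of them suffices to produce the strict inequality for some small $\e$ and any $\la>0$. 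In case~(2) no such exponent matching is available, and one instead fixes $\e_0>0$ and takes $\la$ large enough so that the large negative term $-c\la\,\e_0^{E(n,p)}$ absorbs the (now bounded) positive errors evaluated at $\e_0$.

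The main obstacle is the positive contribution of order $\e^{2-2s}$ arising from the Gagliardo seminorm $[w_\e]_s^2$ in $\mc G(w_\e)^2$: this term is entirely absent from the purely local analysis of \cite{GY}, it cannot be removed by any refinement of the truncation $\psi$, and it is the genuine source of both the new third entry in the minimum of case~(1) and of the large-$\la$ alternative of case~(2).
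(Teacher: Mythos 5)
Your route is the same as the paper's: the truncated Aubin--Talenti bubble $w_\e=\psi V_{\e,x_0}$, the four asymptotics (gradient term, Gagliardo term of size $O(\e^{2-2s})$, HL term with quantitative error, and $\int_\Om w_\e^{p+1}dx\sim \e^{E(n,p)}$ with $E(n,p)=n-\tfrac{(n-2)(p+1)}{2}$ once $n>\tfrac{2(p+1)}{p}$), the fibering maximum $t_\e$ bounded above by dropping the $\la$-term and below by a constant, the substitution back into $J_\la$, exponent matching in case (1), and ``fix $\e_0$, let $\la\to\infty$'' in case (2). Two of your steps, however, would not survive as written. In case (2) you invoke the lower bound $t_\e\geq c>0$ to claim that the term $-c\,\la\,\e_0^{E(n,p)}$ becomes large and absorbs the (fixed-size) errors; but that lower bound is uniform only in $\e$ for fixed $\la$, while here $\e_0$ is fixed and $\la\to\infty$, in which regime your own first-order condition forces $t_{\la,\e_0}\to 0$, so the negative term $-c\,\la\, t_{\la,\e_0}^{p+1}\int_\Om w_{\e_0}^{p+1}dx$ actually tends to $0$ and absorbs nothing. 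The correct mechanism (the paper's) is that $t_{\la,\e_0}\to0$ makes the whole maximum collapse: $\sup_{s\geq0}J_\la(sw_{\e_0})\leq \sup_{s\geq0}\bigl[\tfrac{s^2}{2}\mc G(w_{\e_0})^2-\tfrac{\la s^{p+1}}{p+1}|w_{\e_0}|_{p+1}^{p+1}\bigr]=C\la^{-2/(p-1)}\to0$, which is below the strictly positive threshold. Easy to repair, but the step as stated fails.

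In case (1) the crux is precisely the exponent bookkeeping, and you assert rather than derive it: you say the matching ``yields precisely the three entries'' and that ``any one of them suffices''. Carrying out the comparison of $E(n,p)=2-\tfrac{(n-2)(p-1)}{2}$ with the three error exponents gives: against $n-\tfrac{\mu}{2}$ it is automatic once $n>\tfrac{2(p+1)}{p}$ (since $(n-2)(p+1)>n>\mu$); against $n-2$ it gives $n>\tfrac{2(p+3)}{p+1}$; and against the Gagliardo exponent $2-2s$ it gives $n>2+\tfrac{4s}{p-1}$, which is not the quoted entry $2\bigl(1+\tfrac{2-2s}{p-1}\bigr)$ (the two agree only for $s=\tfrac12$). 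Moreover, for the negative term to win you need $E(n,p)$ strictly below \emph{all} the error exponents simultaneously, so exceeding a single entry of the minimum is not by itself sufficient; note that $\|w_\e\|_{HL}^2=\|\mc U\|_{HL}^2+o(1)$ must also be upgraded to a quantitative lower bound with error $O(\e^{n-\mu/2})$ for any matching to be possible. To be fair, the paper's own proof is equally terse at exactly this point (it simply cites the hypothesis $n>\min\{\cdots\}$ after listing the estimates), but since this comparison is the only genuinely new content of the lemma beyond \cite{GY}, your proposal needs to perform it explicitly instead of taking the statement's thresholds as the outcome of the computation.
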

\begin{proof}
	Let $\ds V(x)=\frac{[n(n-2)]^\frac{n-2}{4}}{(1+|x|^2)^\frac{n-2}{2}}$. Then we know from \cite{GY1} that $V$ is a minimizer of $S_{H,L,C}$. Assume that $B_\de \subset \Om \subset B_{2\de}$ and let $\eta \in C_0^\infty(\Om)$ be such that $0\leq \eta \leq 1$, $\eta(x)=1$ in $B_\de$ and $\eta(x)=0$ in $\mb R^n \setminus \Om$. We define, for $\e >0$,
	\begin{equation}\label{e4.6}
		V_\e(x):=\e^\frac{2-n}{2}V\left(\frac{x}{\e}\right)~\text{and}~v_\e(x):= \eta(x)V_\e(x).
	\end{equation}
Then from \cite[p22]{BDVV} and \cite[Lemma 2.5]{GY}, we know that as $\epsilon\to 0$
\begin{equation}\label{e4.7}
	\|v_\e\|^2 =C(n,\mu)^{\frac{n-2}{2n-\mu}\cdot \frac{n}{2}}S_{H,L,C}^\frac{n}{2}+O(\e^{n-2}),
\end{equation}	
\begin{equation}\label{e4.8}
\I\Om\I\Om \frac{|v_\e(x)|^{2_\mu^\ast}|v_\e(x)|^{2_\mu^\ast}}{|x-y|^\mu} \geq C(n,\mu)^\frac{n}{2}S_{H,L,C}^\frac{2n-\mu}{2}-O(\e^{n-\frac{\mu}{2}}),
\end{equation}
and defining $\nu_{s,n}=\min\{n-2,2-2s\}$
\begin{equation}\label{e4.9}
	[v_\e]_s =O(\e^{\nu_{s,n}}).
\end{equation}
Now we argue as in \cite{GY} and consider the following two cases:\\
\textbf{Case 1.} $n> \max\left\{\min\left\{\frac{2(p+3)}{p+1},2+\frac{\mu}{p+1},2\left(1+\frac{2-2s}{p-1}\right)\right\},\frac{2(p+1)}{p}\right\}.$\\
First by the proof of \cite[Lemma 4.1]{DH}, since $p>1$ and $n> \frac{2(p+1)}{p}$, we know $n<(n-2)(p+1)$ and then as $\e\to 0$
\begin{equation}\label{e4.10}
	|v_\e|_{p+1}^{p+1}=O(\e^{n-\frac{(n-2)(p+1)}{2}})+O(\e^\frac{(n-2)(p+1)}{2})=O(\e^{n-\frac{(n-2)(p+1)}{2}}).
\end{equation}
Using the estimates in \eqref{e4.7}, \eqref{e4.8}, \eqref{e4.9} and \eqref{e4.10}, we have
\begin{align*}
	J_\la(sv_\e)=&\frac{s^2}{2}\mc G(v_\e)^2 -\frac{\la s^{p+1}}{p+1}|v_\e|_{p+1}^{p+1}-\frac{s^{2\cdot2_\mu^\ast}}{2\cdot2_\mu^\ast}\|v_\e\|_{HL}\\
	\leq & \frac{s^2}{2}\left(C(n,\mu)^{\frac{n-2}{2n-\mu}\cdot\frac{n}{2}}S_{H,L,C}^\frac{n}{2}+O(\e^{\nu_{s,n}})\right) -\frac{\la s^{p+1}}{p+1}O(\e^{n-\frac{(n-2)(p+1)}{2}})\\
	&-\frac{s^{2\cdot2_\mu^\ast}}{2\cdot2_\mu^\ast}\left(C(n,\mu)^\frac{n}{2}S_{H,L,C}^\frac{n}{2}-O(\e^{n-\frac{\mu}{2}})\right)\\
	:=&f(s).
\end{align*}
Note that $f(s) \ra -\infty$ as $s \ra \infty$ and $f$ is increasing near $0$. Thus there exists $s_\e >0$ such that $\sup\limits_{s>0}f(s)$ is achieved at some  $s_\e$. Then clearly $s_\e$ satisfies the following
\begin{equation*}
	s_\e < \left(\frac{C(n,\mu)^{\frac{n-2}{2n-\mu}\cdot\frac{n}{2}}S_{H,L,C}^\frac{n}{2}+O(\e^{\nu_{s,n}})}{C(n,\mu)^\frac{n}{2}S_{H,L,C}^\frac{2n-\mu}{2}-O(\e^{n-\frac{\mu}{2}})}\right)^\frac{1}{2\cdot 2_\mu^\ast-2}:= S_{H,L,C}(\e)
\end{equation*}
and it is easy to see that there exists $s_0>0$ independent of $\e$ such that for all $\e>0$ small enough $s_\e >s_0$.
Notice that the function 
\begin{equation*}
	s \ra \frac{s^2}{2}\left(C(n,\mu)^{\frac{n-2}{2n-\mu}\cdot\frac{n}{2}}S_{H,L,C}^\frac{2n-\mu}{2}+O(\e^{\nu_{s,n}})\right)-\frac{s^{2\cdot2_\mu^\ast}}{2\cdot2_\mu^\ast}\left(C(n,\mu)^\frac{n}{2}S_{H,L,C}^\frac{2n-\mu}{2}-O(\e^{n-\frac{\mu}{2}})\right)
\end{equation*}
is increasing on $[0,S_{H,L,C}(\e)]$, we have
\begin{align*}
	\max\limits_{s \geq 0} J_\la(sv_\e) \leq& f(s_\e)\\
	=&\frac{s_\e^2}{2}\left(C(n,\mu)^{\frac{n-2}{2n-\mu}\cdot\frac{n}{2}}S_{H,L,C}^\frac{n}{2}+O(\e^{\nu_{s,n}})\right) 
	-\frac{s_\e^{2\cdot2_\mu^\ast}}{2\cdot2_\mu^\ast}\left(C(n,\mu)^\frac{n}{2}S_{H,L,C}^\frac{n}{2}-O(\e^{n-\frac{\mu}{2}})\right)\\
	&-O(\e^{n-\frac{(n-2)(p+1)}{2}})\\
	\leq&\frac{S^2_{H,L,C}(\e)}{2}\left(C(n,\mu)^{\frac{n-2}{2n-\mu}\cdot\frac{n}{2}}S_{H,L,C}^\frac{n}{2}+O(\e^{\nu_{s,n}})\right) \\
	&-\frac{S^{2\cdot2_\mu^\ast}_{H,L,C}(\e)}{2\cdot2_\mu^\ast}\left(C(n,\mu)^\frac{n}{2}S_{H,L,C}^\frac{n}{2}-O(\e^{n-\frac{\mu}{2}})\right)
	-O(\e^{n-\frac{(n-2)(p+1)}{2}})\\
		\end{align*}
\begin{align*}
	 =&\frac{n+2-\mu}{4n-2\mu} \left(\frac{C(n,\mu)^{\frac{n-2}{2n-\mu}\cdot\frac{n}{2}}S_{H,L,C}^\frac{n}{2}+O(\e^{\nu_{s,n}})}{\left(C(n,\mu)^\frac{n}{2}S_{H,L,C}^\frac{2n-\mu}{2}-O(\e^{n-\frac{\mu}{2}})\right)^\frac{n-2}{2n-\mu}}\right)^\frac{2n-\mu}{n+2-\mu}-O(\e^{n-\frac{(n-2)(p+1)}{2}})\\
	\leq& \frac{n+2-\mu}{4n-2\mu}S_{H,L,C}^\frac{2n-\mu}{n+2-\mu} +O(\e^{\min\{\nu_{s,n},n-\frac{\mu}{2}\}})-O(\e^{n-\frac{(n-2)(p+1)}{2}})\\
	\leq&\frac{n+2-\mu}{4n-2\mu}S_{H,L,C}^\frac{2n-\mu}{n+2-\mu},
\end{align*}
since $s_0 <s_\e<S_{H,L,C}(\e)$, \eqref{e4.10} and $n >\min\left\{\frac{2(p+3)}{p+1},2+\frac{\mu}{p+1},2\left(1+\frac{2-2s}{p-1}\right)\right\}$.\\
\textbf{Case 2.}  $n \leq \max\left\{\min\left\{\frac{2(p+3)}{p+1},2+\frac{\mu}{p+1},2\left(1+\frac{2-2s}{p-1}\right)\right\},\frac{2(p+1)}{p}\right\}.$\\
Again since for any fixed $\e$ in \eqref{e4.6} $\ds J_\la(s v_\e) \ra -\infty$ as $s \ra +\infty$, we have that $\ds \max\limits_{s \geq 0} J_\la (sv_\e)$ is achieved at some $s_{\lambda,\e}$ and $s_{\la,\e}$ satisfies
\begin{equation*}
	s_{\la,\e} \mc G(v_\e)^2 =\la s_{\la,\e}^p |v_\e|_{p+1}^{p+1}+s_{\la,\e}^{2\cdot2_\mu^\ast-1}\|v_\e\|^{2\cdot2_\mu^\ast}_{HL},
\end{equation*}
which implies
\begin{equation*}
	\mc G(v_\e)^2=\la s_{\la,\e}^{p-1} |v_\e|_{p+1}^{p+1}+s_{\la,\e}^{2\cdot2_\mu^\ast-2}\|v_\e\|^{2\cdot2_\mu^\ast}_{HL}.
\end{equation*}
Thus $s_{\la,\e} \ra 0$ as $\la \ra \infty$. Then
\begin{align*}
	\max\limits_{s\geq0} J_\la(sv_\e)=\frac{s_{\la,\e}^2}{2}\mc G(v_\e)^2 -\frac{\la s_{\la,\e}^{p+1}}{p+1}|v_\e|_{p+1}^{p+1}-\frac{s_{\la,\e}^{2\cdot2_\mu^\ast}}{2\cdot2_\mu^\ast}\|v_\e\|^{2\cdot2_\mu^\ast}_{HL} \ra 0
\end{align*}
as $\la \ra \infty$, which easily yields the desired conclusion for this case. \QED
	\end{proof} 
\textbf{Proof of Theorem \ref{t4.1}}
Combining Lemmata  \ref{l4.2}, \ref{l4.3}, \ref{l4.4}, \ref{l4.5} and using Mountain Pass Theorem without the $(PS)$ condition, we conclude that $J^+_\la$ has a critical value $c \in  \ds \left(0,\frac{n+2-\mu}{4n-2\mu}S_{H,L,C}^\frac{2n-\mu}{n-\mu+2}\right)$ and thus the problem \eqref{e1.1} has a nontrivial solution. \QED 

	\textbf{Acknowledgement:} The first author thanks the CSIR(India) for financial support in the form of a Senior Research Fellowship, Grant Number $09/086(1406)/2019$-EMR-I. The second author is partially funded by IFCAM (Indo-French Centre for Applied Mathematics) IRL CNRS 3494.

		\end{document}